\definecolor{flesh}{rgb}{.90,.60,.50}
\definecolor{night}{rgb}{.05,0,.05}
\definecolor{gold}{rgb}{.80,.75,.10}
\definecolor{wine}{rgb}{1.00,0,.10}
\definecolor{dimmed}{gray}{0.9}
\definecolor{blau1}{rgb}{0,0.3,0.65}
\definecolor{red1}{rgb}{.5,.0,.0}
\definecolor{green1}{rgb}{.0,.5,.0}
\definecolor{water}{rgb}{.0,.9,0.7}
\definecolor{panelbackground}{rgb}{0,0.39,0.61}
\definecolor{blue1}{rgb}{0.367,0.679,0.839}
\definecolor{blue2}{rgb}{0.000,0.391,0.605}
\definecolor{blue3}{rgb}{0.4,0.8,0.9}
\def\en t{{{\rm Z}\mkern-5.5mu{\rm Z}}}
\newtheorem{theorem}{Theorem}[section]
\newtheorem{definition}[theorem]{Definition}
\newtheorem{example}[theorem]{Example}
\newtheorem{lemma}[theorem]{Lemma}
\newtheorem{proposition}[theorem]{Proposition}
\newtheorem{remark}[theorem]{Remark}
\newenvironment{proof}[1][Proof]{\textbf{#1.} }{\ \rule{0.5em}{0.5em}}
\begin{document}

\begin{CJK*}{UTF8}{gbsn} 

\title{\large\bf On the Navier-Stokes equations and  the Hamilton-Jacobi-Bellman equation 
on the group of volume preserving diffeomorphisms }
\author{Xiang-Dong Li\thanks{Research supported by National Key R$\&$D Program of China (No.~2020YF0712702), NSFC No. 12171458, and Key Laboratory RCSDS, CAS, No. 2008DP173182.}, \ \ Guoping Liu\thanks{Research supported by NSFC No. 11801196 and the Fundamental Research Funds for Central Universities, No. 2018KFYYXJJ043.}}

\maketitle


\numberwithin{equation}{section}

\begin{abstract}
In this paper, we give a new derivation of the incompressible 
Navier-Stokes equations on  a compact Riemannian manifold $M$ via the Bellman dynamic programming principle on the infinite dimensional group $SG={\rm SDiff}(M)$ of 
volume preserving diffeomorphisms.  In particular, when the viscosity vanishes, we give a new derivation of the incompressible Euler equation on  a compact Riemannian manifold. The main result of this paper indicates an interesting relationship among the incompressible Navier-Stokes equations on $M$, the Hamilton-Jacobi-Bellman equation and the viscous Burgers equation  on $SG={\rm SDiff}(M)$. 
This extends Arnold's famous theorem on the geometric interpretation of the incompressible Euler equation on a compact Riemannian manifold $M$ by the geodesic equation on the group $SG={\rm SDiff}(M)$ of volume preserving diffeomorphisms.

\end{abstract}

MSC2010 Classification: primary 35Q30, 49L20，secondary 58J65, 60H30
\medskip

Keywords: Navier-Stokes equations, Bellman dynamic  programming principle, Hamilton-Jacobi-Bellman and Burgers equations, the group of 
volume preserving diffeomorphisms

\section{Introduction}\label{Section1}

Hydrodynamics is one of the fundamental areas in mathematics and physics. In  \cite{Arnold66} , Arnold gave a geometric interpretation of the incompressible Euler equation on a Riemannian manifold $M$ as a geodesic on the group $SG={\rm SDiff}(M)$ of volume preserving diffeomorphisms, a subgroup of $G={\rm Diff}(M)$ of diffeomorphisms on $M$, equipped with the $L^2$-right 
invariant Riemannian metric. See also \cite{AK}.

In \cite{EM70} (see also Taylor \cite{Taylor}), the local 
existence and uniqueness of the Cauchy problem to the incompressible Euler equation (i.e., Arnold's geodesic)  within 
suitable Sobolev space was well-established. 
When ${\rm{dim}\hspace{0.3mm}} M=2$, the global existence and uniqueness is known. See e.g. \cite{Taylor}. When ${\rm{dim}\hspace{0.3mm}} M\geq 3$, it remains a major outstanding problem whether the Euler equation has global smooth solutions. On the other hand, concerning the minimal geodesic 
linking two points in $SG$,  Ebin and Marsden  \cite{EM70}  proved that if $h$ belongs to a sufficiently small 
neighborhood of the identity map for a suitable Sobolev norm, 
there is a unique minimal geodesic connecting $h$ and the identity map. However,  a striking result of 
Shnirelman \cite{Shnirelman} shows that, for $M=[0, 1]^3$ and for a large class of data, 
there is no minimal geodesic. This shows
the difficulty of applying the variational method to construct nonstationary  incompressible flows in the
three-dimensional case. In recent years, there have been some new investigations on the incompressible Euler equation. For instance, Brenier \cite{Brenier:99} introduced the notion of  generalized solutions to the imcompressible Euler equation and proved the existence of the generalized solutions. 
He also proved a regularity estimate on the pressure of the generalized solution to the Euler equation. In \cite{Ambrosio-Figalli:10}, Ambrosio and Figalli improved Brenier's regularity estimates for the gradient of the pressure. 

When the fluid is viscid,  the incompressible Navier-Stokes equations on $\mathbb{R}^d$ take the form
	\begin{eqnarray}
	      \begin{cases}
	      $$\partial_t u+\left(u\cdot \nabla \right)u-\nu\Delta u=-\nabla p,$$&\mbox{$$}\\
	      $$\rm{div}~ \hspace{0.3mm} u=0,$$&\mbox{$$}
	      \end{cases}
	      \label{NS1}
	\end{eqnarray}
	where $\nu>0$ is the viscosity constant. In this situation, the system is no longer conservative because of the friction term.  Local existence with initial data $u_0\in L^{n}(\mathbb{R}^d)$ was established in Kato \cite{Kato84}.  Ebin and Marsden \cite{EM70} proved the local existence and uniqueness of smooth solutions to the incompressible Navier-Stokes equations on Riemannian manifolds. See also Taylor \cite{Taylor}. 
When ${\rm{dim}\hspace{0.3mm}} M=2$,
global existence of smooth solutions is also known.  When ${\rm{dim}\hspace{0.3mm}}M \geq 3$, 
 it is a long time outstanding open problem (one of the Millennium Prize 
Problems) whether the incompressible Navier-Stokes equations have global smooth solutions. 

It is interesting to ask the question whether one can extend Arnold's point of view to give a new derivation of the incompressible Navier-Stokes equations $(\ref{NS1})$.
It is well-known that the Laplacian $\Delta$ is the infinitesimal generator of the Brownian motion. So, it is reasonable to derive the incompressible Navier-Stokes equations  by adding the effect of the Brownian motion to the 
Euler equation. In some sense, this 
gives an explanation of the physical term ``internal friction'' (Landau-Lifshitz \cite{Lan}, see
also Serrin \cite{Serrin}) by a probabilistic point of view. To our knowledge, the connections
between Navier-Stokes equations and stochastic evolution traced back to Chorin \cite{Cho}, and a stochastic Hamiltonian approach was given by  Inoue and Funaki \cite{Inoue-Funaki:79}. Since then,  many people have studied the incompressible Navier-Stokes equation using a stochastic Lagrangian variational calculus together with Nelson's stochastic mechanics  \cite{Nelson:79}. For this, we mention the works by Yasue \cite{Yasue:81, Yasue:83}, Nakagomi-Yasue-Zambrini\cite{Nakagomi-Yasue-Zambrini:81} and  Esposito et al \cite{EMPS}. 

In recent years, more probabilistic works have been developed for the incompressible Navier-Stokes equations. In  \cite{LeJan-S}, Le Jan and
Sznitman used a backward-in-time branching process in Fourier space to express the
velocity field of a three-dimensional viscous fluid as the average of a stochastic process,
which then leads to a new existence theorem. In \cite{Bus}, Busnello introduced a  probabilistic approach to the existence of a unique global solution for two dimensional Navier-Stokes equations. In \cite{Bus-Fland}, Busnello, Flandoli and Remito gave the  probabilistic representation formulas for the vorticity and the velocity of
three dimensional Navier-Stokes equations and gave a new proof of the local existence of  solution. In \cite{Cipriano-Cruzeiro:07}, Cipriano and Cruzeiro gave a stochastic variational principle for two dimensional
incompressible Navier-Stokes equations by using the Brownian motions on the
group of homeomorphisms on the torus. In \cite{CS21}, Cruzeiro and Shamarova 
established a connection between the strong solution to the spatially periodic Navier-Stokes equations and a solution to a system of forward-backward stochastic differential equations on the group of volume preserving diffeomorphisms of a flat torus. They also constructed representations of the strong solution to the Navier-Stokes equations in terms of diffusion processes. In \cite{Constantin-Iyer:08}, Constantin and Iyer derived a probabilistic representation of the three-dimensional Navier-Stokes equations based on stochastic Lagrangian paths. As they proved, the particle trajectories obey SDEs driven by a uniform Wiener process, the inviscid Weber formula for the Euler equation of ideal fluids is used to recover the velocity field. This method admits a self-contained proof of local existence for the nonlinear stochastic system and can be extended to formulate stochastic representations of related hydrodynamic-type equations, including viscous Burgers equations and Lagrangian-averaged Navier-Stokes alpha models. By reversing the time variable, Zhang \cite{Zhang:10} derived a stochastic representation for backward incompressible Navier-Stokes equations in terms of stochastic Lagrangian paths and gave a self-contained proof of local existence of solutions in Sobolev spaces in the whole space. He also gave an alternative proof to the global existence for the two-dimensional incompressible Navier-Stokes equations with 
large viscosity. For extension of the above results to compact Riemannian manifolds, see  Fang-Zhang\cite{Fang-Zhang:06}, Arnaudon-Cruzeiro\cite{Arnaudon-Cruzeiro:12}, Arnaudon-Cruzeiro-Fang \cite{ACF}, Fang-Luo\cite{Fang-Luo:15}, Luo\cite{Luo:15} and Fang \cite{Fang}. Owing to the limit of the paper, we will not try to give a full description and citation of all previous works on the probabilistic studies for incompressible Navier-Stokes equations.

The purpose of this paper is to use the Bellman dynamic programming principle on the infinite dimensional group of volume 
preserving diffeomorphisms on a compact Riemannian manifold to derive  the
incompressible Navier-Stokes equations on a compact Riemannian manifold. We would like to point out that, although there is already an extensive literature on the Euler-Lagrangian formulation of the
incompressible Navier-Stokes equations, our work has the following three novelties (see Theorem \ref{Main} and Remark \ref{forwardNS} below):

1. We prove that there exists a one to one correspondence  between the set of smooth
solutions to the   incompressible Navier-Stokes equations on a compact Riemannian manifold $M$ and the set of 
solutions to the viscous Burgers equation on $SG={\rm SDiff}(M)$.
More precisely,  $u(t, x)$ is a smooth solution to the incompressible Navier-Stokes equations $(\ref{NSM-u1})$ and  $(\ref{NSM-u2})$ on a compact Riemannian manifold $M$, if and only if $U(t, g)=(dR_g) U(t, e)$ is a smooth solution to the viscous Burgers equation $(\ref{BurgersG}$) on $SG={\rm SDiff}(M)$, where $dR_g$ is the differential of the right translation  $R_g: SG\rightarrow SG$, and $U(t, e)=\{u(t, x): x\in M\}$. This is indeed the extension of Arnold's famous theorem on the geometric interpretation of the incompressible Euler equation on a compact Riemannian manifold $M$ by the geodesic equation on the group $SG={\rm SDiff}(M)$ of volume preserving diffeomorphisms. 

2. The value function $W$ for an optimal stochastic control problem on $SG={\rm SDiff}(M)$  satisfies the Hamilton-Jacobi-Bellman equation (see $(\ref{HJB-W})$ below) on $SG={\rm SDiff}(M)$, and the optimal Markov control $U=-\nabla^{SG} W$ on $SG={\rm SDiff}(M)$  satisfies the backward viscous Burgers equation $(\ref{BurgersG}$) on $SG={\rm SDiff}(M)$.

3. As a consequence,  we prove that $u(t, x)=-(de_{x}) \nabla^{SG} W(t, e)$ gives us an explicit solution to the backward incompressible Navier-Stokes equation $(\ref{NSM-u1})$ and  $(\ref{NSM-u2})$ on $M$, where $de_x: T_e {SG}\rightarrow T_xM $ is the differential of the evaluation mapping $e_x: SG\rightarrow M, g\mapsto g(x)$ at $g=e$ (the unit element in $SG$).  By time reversal, $-u(T-t, x)=(de_{x}) \nabla^{SG} W(T-t, e)$ gives us an explicit solution to the forward incompressible Navier-Stokes equations $(\ref{NSMF1})$ and  $(\ref{NSMF2})$ on $M$.

To the best knowledge of the authors, this is a new point of view even in the case where the viscosity coefficient vanishes. That is to say, the value function $W$  for a deterministic optimal control problem on $SG={\rm SDiff}(M)$ satisfies the Hamilton-Jacobi equation (see $(\ref{HJ-W})$ below) on $SG={\rm SDiff}(M)$, and $u(t, x)=-(de_{x}) \nabla^{SG} W(t, e)$ gives us an explicit solution to the incompressible Euler equation on $M$. See Theorem \ref{Main-Euler} below.  These suggest a deep and close connection between the incompressible Navier-Stokes (respectively,  Euler) equations and the Hamilton-Jacobi-Bellman (respectively, the Hamilton-Jacobi) equation via the Bellman dynamic programming  principle on the group of volume preserving diffeomorphisms. We hope that our work may bring  some new point of views for the further study of the incompressible Navier-Stokes and Euler equations.  

The rest of the paper is organized as follows. In Section 2, we extend the 
Bellman dynamic programming principle and derive the Hamilton-Jacobi-Bellman  equation on compact Riemannian manifolds. In Section 3, we  extend the  
Bellman dynamic programming principle to the infinite dimensional group
$SG={\rm SDiff}(M)$ of volume preserving diffeomorphisms. In Section 4, we derive the incompressible  Navier-Stokes equations 
on a compact Riemannian manifold using the 
Bellman dynamic programming principle and the Hamilton-Jacobi-Bellman   equation on $SG={\rm SDiff}(M)$. In Section 5, we give a new derivation of the incompressible Euler equation via the deterministic dynamic programming principle and the Hamilton-Jacobi equation on $SG={\rm SDiff}(M)$. In Section 6, we formulate our main result which indicates an interesting relationship among the incompressible Navier-Stokes equations on $M$, the Hamilton-Jacobi-Bellman 
equation and viscous Burgers equation  on the group $SG={\rm SDiff}(M)$  of volume preserving diffeomorphisms, and we raise some problems  for further research work.

\section{Dynamic programming principle and HJB equation on Riemannian manifolds}

In this section, we extend the classical  Bellman dynamic programming principle (briefly, DPP) from the Euclidean space to compact Riemannian manifolds, and derive the Hamilton-Jacobi-Bellman (briefly, HJB) and viscous Burgers equations on Riemannian manifolds.

First, for the convenience of the reader, we briefly review the standard results on the Bellman dynamic programming principle and the Hamilton-Jacobi-Bellman equation in stochastic control theory. 

Consider the following controlled SDE on $\mathbb{R}^d$
	\begin{eqnarray}
	      \begin{cases}
	      $$dx^u(t)=\sqrt{2\nu}dB_t+u(t, x^u(t))dt,$$&\mbox{$$}\\
	      $$x^u(0)=x,$$&\mbox{$$}
	      \end{cases}
	      \label{SDER}
	\end{eqnarray}
where $B_t$ is a $\mathbb{R}^d$-valued standard Brownian motion. The
corresponding action function on $\mathbb{R}^d$ is given by
\begin{eqnarray}
     S[x^u]=\mathbb{E}\left[\int_0^T \frac{1}{2}|D_t x^u(t)|^{2}_{\mathbb{R}^d} dt\right].
\end{eqnarray}
Following \cite{Nelson:79},  the Nelson derivative of the diffusion process $x^u(t)$ is defined as follows
\begin{eqnarray}\label{NelR}
D_tx^u(t):=\lim\limits_{\varepsilon\rightarrow 0} \mathbb{E}\left[\left. {dx^u(t+\varepsilon)-dx^u(t)\over \varepsilon} \right|\mathcal{F}_t\right].
\end{eqnarray}

Let $\mathcal{U}$ 
be the set of all time dependent adapted smooth vector fields $X$ on $\mathbb{R}^d$ which satisfies the $L^2$-integrability condition 
\begin{eqnarray}
      \|X\|_{L^2}^2=\mathbb{E}\left[\int_0^T |X(s, x(s))|_{\mathbb{R}^d}^2  ds\right]<+\infty.
\end{eqnarray}

Let  $\Psi\in C(\mathbb{R}^d, \mathbb{R})$, and define  the value function
\begin{eqnarray}
     W(t, x)=\inf_{u\in \mathcal{U}}\mathbb{E}\left[\left.\int_t^T \frac{1}{2}|D_s x^u(s)|_{\mathbb{R}^d}^2 ds+\Psi(x^u(T))\right|x^u(t)=x\right].
\end{eqnarray}
By Bellman's principle of stochastic dynamic programming on Euclidean space, it is well-known that $W$ satisfies the Hamilton-Jacobi-Bellman equation
\begin{eqnarray}
     \partial_t W+\nu\Delta W-\frac{1}{2}|\nabla W|^2=0, \label{HJB-1}
\end{eqnarray}
and the optimal Markov control is given by 
\begin{eqnarray}
      u(t, x)=-\nabla W(t, x).
\end{eqnarray}
See e.g. Zambrini \cite{Zambrini86}, 
Fleming and Soner \cite{Fleming-Soner}. 
Suppose that $W\in C^{1, 3}([0, T]\times \mathbb{R}^d, \mathbb{R})$.  Differentiating the both sides of $(\ref{HJB-1})$ with respect to spatial variable $x\in \mathbb{R}^d$, we derive the viscous Burgers equation on $\mathbb{R}^d$
\begin{eqnarray}
     \partial_t u+\nu\Delta u+\nabla_uu=0. \label{Euclidean-Burges}
\end{eqnarray}

\subsection{Dynamic programming principle on Riemannian manifolds}

Let $(M, g)$ be a $d$-dimensional compact Riemannian manifold without boundary, $g$ the Riemannian metric, and $dv=\sqrt{{\rm det}g(x)}dx$ the volume measure on $(M, g)$. Let $\nabla^{TM}$ be the Levi-Civita covariant derivative operator on $TM$, and $\nabla^M$ (respectively, $\Delta^M$) the Riemannian gradient (respectively, the Laplace-Beltrami) operator on $(M, g)$.
Let $(\Omega, \mathcal{F},(\mathcal{F}_s),\mathbb{P})$ be a complete probability space endowed with an increasing
filtration  $(\mathcal{F}_s)$ which satisfies the usual condition.

Let $u\in C^{1, 2}([0, T]\times M, TM)$. Let $x_0\in M$ be fixed, and $B_t$ be a standard Brownian motion on  $T_{x_0}M$. 
By It\^o's SDE theory,  the following controlled stochastic differential equation on $M$

\begin{eqnarray}
      \begin{cases}
      $$dx(s)=\sqrt{2\nu}U_{t\rightarrow s} \circ dB_{t}+u(s, x(s))ds,$$&\mbox{$t\leq s\leq T$}\\
      $$x(t)=x,$$&\mbox{$x\in M$}
      \end{cases}
       \label{SDEM}
\end{eqnarray}
admits a  unique strong solution, where $U_{t\rightarrow s}: T_{x(t)}M\rightarrow T_{x(s)}M$ denotes the stochastic parallel transport along the trajectory of the diffusion process $\{x(r): r\in [t, s]\}$, which satisfies the covariant SDE on $M$
\begin{eqnarray}\label{ch2-eq:U}
    \nabla_{\circ dx(s)} U_{t\rightarrow s}=0, \ \ \ U_{t\rightarrow t}={\rm id}_{T_{x(t)} M}, \ \ \forall \ t\leq s\leq T,
     \end{eqnarray}
    where $\circ$ denotes the Stratonovich differentiation.  See \cite{Ma97, ELL}.

Let $\mathcal{U}^M$ be the set of all time dependent adapted smooth vector fields $X$ on $M$ which satisfies the $L^2$-integrability condition 
\begin{eqnarray}
      \|X\|_{L^2}^2=\mathbb{E}\left[\int_0^T |X(s, x(s))|_{T_{x(s)}M}^2ds\right]<+\infty.
\end{eqnarray}

	Given a Lagrangian function $L\in C^1([0, T]\times TM, \mathbb{R})$ and $\Psi\in C(M, \mathbb{R})$, 
we consider the following minimization problem on a finite time interval $t\leq s\leq T$ 

\begin{eqnarray}\label{ch2-eq:2}
       {\rm minimizes}\ J(t,x;u)=\mathbb{E}_{t,x}\left[\int_t^TL(s,x(s),u(s))ds+\Psi(x(T))\right],
\end{eqnarray}
subject to $u\in \mathcal{U}^M=\left\{u  {\rm{~is~ (\mathcal{F}_s)~adapted~}}:\mathbb{E}\left[\int_0^T |u(s, x(s))|_{T_{x(s)}
 M}^2ds\right]<\infty \right\}$.  For simplicity, we write $\mathbb{E}_{t, x}[\cdots]$ instead of 
$\mathbb{E}\left[\left. \cdots\right|x(t)=x\right]$ throughout this paper.

By the same argument as in the proof of the dynamic programming equation on the Euclidean spaces,  
see e.g. Fleming and Soner\cite{Fleming-Soner}, we can prove the following

\begin{theorem} \label{Bellman} Let $M$ be a compact Riemannian manifold without boundary. Let $W$ be  the value function defined by 

\begin{eqnarray}\label{DPPM}
      W(t, x)=\inf\limits_{u\in\mathcal{U}^M}J(t, x; u).
\end{eqnarray}
Assume that $L\in C^{1}([0, T]\times TM, \mathbb{R})$ and $W\in  C^{1, 2}([0, T]\times M,\mathbb{R})$. 
Then the dynamic programming equation holds

\begin{eqnarray}\label{ch2-eq:6}
0=\inf\limits_{v\in\mathcal{U}^M} \left[A^v W(t, x)+L(t, x, v) \right],
\end{eqnarray}
where
	\begin{eqnarray*}
A^v=\partial_tW+\nu\Delta^M W+\langle v, \nabla^M W\rangle_{T_x M}.
\end{eqnarray*}

	\end{theorem}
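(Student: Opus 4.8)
The plan is to first establish the dynamic programming principle (DPP) in its semigroup form, and then to pass to its infinitesimal version via It\^o's formula in order to obtain the stated equation $(\ref{ch2-eq:6})$. Throughout I work under the standing hypotheses $L\in C^{1}$ and $W\in C^{1,2}$, so that the classical (rather than viscosity) It\^o calculus applies and no weak-solution machinery is needed.

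First I would prove the DPP: for every $0\le t\le t+h\le T$,
\[
W(t,x)=\inf_{u\in\mathcal{U}^M}\mathbb{E}_{t,x}\left[\int_t^{t+h}L(s,x(s),u(s))\,ds+W(t+h,x(t+h))\right].
\]
Exactly as in the Euclidean theory of Fleming and Soner \cite{Fleming-Soner}, this follows by splitting the cost at the intermediate time $t+h$, invoking the Markov property of the controlled diffusion $(\ref{SDEM})$ together with the tower property of conditional expectation, and a measurable-selection argument that concatenates an arbitrary control on $[t,t+h]$ with a near-optimal control on $[t+h,T]$. Compactness of $M$ provides the uniform moment bounds needed for the two-stage optimization to be well posed.

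Next, for a fixed admissible control $v$ I would apply It\^o's formula to $s\mapsto W(s,x(s))$ along $(\ref{SDEM})$. The essential geometric input is the identification of the infinitesimal generator: because the driving term $\sqrt{2\nu}\,U_{t\to s}\circ dB_t$ is built from the stochastic parallel transport solving $(\ref{ch2-eq:U})$, which is a linear isometry by metric compatibility of $\nabla^{TM}$, the Stratonovich-to-It\^o conversion produces precisely the Laplace-Beltrami operator as its second-order part (the Eells-Elworthy-Malliavin construction, see \cite{Ma97, ELL}). Hence for $W\in C^{1,2}$,
\[
dW(s,x(s))=\Big(\partial_s W+\nu\Delta^M W+\langle v,\nabla^M W\rangle_{T_{x(s)}M}\Big)\,ds+dN_s,
\]
where $N_s$ is a local martingale whose expectation vanishes thanks to the $L^2$-integrability defining $\mathcal{U}^M$. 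Taking $\mathbb{E}_{t,x}$ yields
\[
\mathbb{E}_{t,x}\big[W(t+h,x(t+h))\big]-W(t,x)=\mathbb{E}_{t,x}\left[\int_t^{t+h}\big(\partial_s W+\nu\Delta^M W+\langle v,\nabla^M W\rangle\big)\,ds\right].
\]

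Finally I would combine the two ingredients. For the inequality $0\le\inf_v[A^vW+L]$, I insert a fixed control $v$ into the DPP, subtract $W(t,x)$, substitute the It\^o identity, divide by $h$ and let $h\downarrow 0$; continuity of the integrand together with $x(s)\to x$ gives $0\le A^vW(t,x)+L(t,x,v)$ for every $v$, hence for the infimum. For the reverse inequality $\inf_v[A^vW+L]\le 0$, I use $\varepsilon h$-optimal controls $u^\varepsilon$ in the DPP, bound their integrand below by the pointwise Hamiltonian $\inf_v[\partial_sW+\nu\Delta^MW+\langle v,\nabla^MW\rangle+L(s,x(s),v)]$, divide by $h$ and send $h\downarrow 0$ and then $\varepsilon\downarrow 0$. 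The two inequalities together give $(\ref{ch2-eq:6})$. I expect the main obstacle to be twofold: verifying that the parallel-transport construction yields exactly $\nu\Delta^M$ with no spurious drift in It\^o's formula (where the torsion-free, metric-compatible nature of $\nabla^{TM}$ is essential), and making the measurable-selection/concatenation step rigorous so that the patched controls remain adapted and admissible on $M$. Both are technical but structurally identical to the Euclidean arguments, so compactness of $M$ and the smoothness of $L$ and $W$ suffice to carry them through.
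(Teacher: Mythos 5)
Your proposal is correct and follows exactly the route the paper itself indicates: the paper gives no written proof of this theorem but explicitly defers to the standard Fleming--Soner argument (two-stage DPP plus It\^o's formula with the generator $\partial_t+\nu\Delta^M+\langle v,\nabla^M\cdot\rangle$ arising from the Eells--Elworthy--Malliavin parallel-transport construction), which is precisely what you carry out. No gaps; your identification of the two genuinely technical points (no spurious drift from the Stratonovich correction, and adaptedness of concatenated controls) is apt.
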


\subsection{The Hamilton-Jacobi-Bellman  equation on Riemannian manifolds}

Following \cite{Nelson:79}, see also \cite{Cipriano-Cruzeiro:07, Arnaudon-Cruzeiro:12, ACF}, the Nelson derivative of the diffusion process $x(t)$ on $M$ is defined as follows
\begin{eqnarray}\label{Nel}
D_tx(t):=\lim\limits_{\varepsilon\rightarrow 0} \mathbb{E}\left[\left. {U_{t+\varepsilon\rightarrow t} \circ dx(t+\varepsilon)-dx(t)\over \varepsilon} \right|\mathcal{F}_t\right].
\end{eqnarray}
Note that, by \cite{Nelson:79, ACF},  for $x(t)$ defined by SDE $(\ref{SDEM})$, the Nelson derivative of $x(t)$ is given by 

$$
D_t x(t)=u(t, x(t)).$$

We now state the main result of this subsection.

\begin{theorem} \label{BurgersM} Given $V\in C^1(M, \mathbb{R})$ and $\Psi\in C(M, \mathbb{R})$. Let \begin{eqnarray}
        L(s, x(s),u(s))=\frac{1}{2}|u(s, x(s))|_{T_{x(s)}M}^2-V(x(s))
\end{eqnarray} 
be the Lagrangian function, and let
\begin{eqnarray*}
     W(t, x)=\inf_{u\in \mathcal{U}^M}\mathbb{E}_{t, x}\left[\int_t^T L(s, x(s), u(s))ds+\Psi(x(T))\right]
\end{eqnarray*}
be the value function. Suppose that  $W \in C^{1, 3}([0, T]\times M, \mathbb{R})$. Then $W$ satisfies the Hamilton-Jacobi-Bellman equation on $M$
\begin{eqnarray}
\partial_tW+\nu  \Delta^M W-\frac{1}{2}|\nabla^M W|^2_{T_{x} M}-V(x)=0, \label{ch2-eq:9}
\end{eqnarray}
with the terminal condition $W(T, x)=\Psi(x)$.
Moreover, the optimal  control is given by 
\begin{eqnarray}
u^*(t, x)=-\nabla^M W(t,  x)\label{vW}.
\end{eqnarray}

Define \begin{eqnarray}
v(t, x)=\nabla^M W(T-t, x).   \label{ch2-eq:9v}
\end{eqnarray}
Then $v$ satisfies the viscous Burgers equation on $M$
\begin{eqnarray}
     \partial_t v+\nu\square^M v+\nabla_{v}^{TM}v+\nabla^M V=0, \label{M-Burgers}
\end{eqnarray}
where $\square^M=-\Delta^M+{\rm Ric^M}$ is the Hodge Laplacian, $\Delta^M={\rm Tr}(\nabla^M)^2$ is the covariant Laplacian, ${\rm Ric}^M$ is the Ricci curvature on $M$, and $\nabla^{TM}$ denotes the Levi-Civita covariant derivative operator on $TM$. 
\end{theorem}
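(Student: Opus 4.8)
The plan is to derive the three assertions in order: the Hamilton--Jacobi--Bellman equation and the optimal control from the dynamic programming equation of Theorem~\ref{Bellman}, and then the viscous Burgers equation by differentiating the HJB equation in the spatial variable.

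For the HJB equation I would start from the dynamic programming equation~(\ref{ch2-eq:6}), which for the present value function reads $0=\inf_{v\in\mathcal{U}^M}\left[A^vW(t,x)+L(t,x,v)\right]$ with $A^vW=\partial_tW+\nu\Delta^M W+\langle v,\nabla^M W\rangle_{T_xM}$ and $L(t,x,v)=\frac12|v|_{T_xM}^2-V(x)$. Since $\partial_tW$, $\nu\Delta^M W$ and $V(x)$ are independent of $v$, the infimum reduces to the pointwise minimization over $v\in T_xM$ of $\langle v,\nabla^M W\rangle_{T_xM}+\frac12|v|_{T_xM}^2$. Completing the square as $\frac12|v+\nabla^M W|_{T_xM}^2-\frac12|\nabla^M W|_{T_xM}^2$ shows that the minimum is attained uniquely at $v=-\nabla^M W(t,x)$ with value $-\frac12|\nabla^M W|_{T_xM}^2$. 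Substituting this back yields the HJB equation~(\ref{ch2-eq:9}) and identifies the optimal Markov control~(\ref{vW}), while the terminal condition $W(T,x)=\Psi(x)$ follows at once from the definition of $W$, the running-cost integral being empty when $t=T$.

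For the Burgers equation I would apply the Riemannian gradient $\nabla^M$ to the HJB equation rewritten in the forward time variable $s$, namely $\partial_sW=-\nu\Delta^M W+\frac12|\nabla^M W|_{T_xM}^2+V$. Three ingredients then enter. First, $\nabla^M$ commutes with $\partial_s$. Second, the symmetry of the Hessian of a function gives the identity $\frac12\nabla^M|\nabla^M W|_{T_xM}^2=\nabla_{\nabla^M W}^{TM}\nabla^M W$. Third, and most importantly, the Bochner--Weitzenb\"ock formula relates the gradient of the Laplace--Beltrami operator to the covariant Laplacian on vector fields by $\nabla^M\Delta^M W=\Delta^M(\nabla^M W)-{\rm Ric}^M(\nabla^M W)$; together with the definition $\square^M=-\Delta^M+{\rm Ric}^M$ this identifies $-\nu\nabla^M\Delta^M W$ with $\nu\square^M(\nabla^M W)$. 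Collecting these, one obtains $\partial_s(\nabla^M W)=\nu\square^M(\nabla^M W)+\nabla_{\nabla^M W}^{TM}\nabla^M W+\nabla^M V$. Setting $v(t,x)=\nabla^M W(T-t,x)$ and using $\partial_t=-\partial_s$ under the time reversal $s=T-t$ reverses the sign of each term on the right-hand side, producing exactly the viscous Burgers equation~(\ref{M-Burgers}).

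The routine parts are the completion of the square and the commutation of $\nabla^M$ with $\partial_s$. The main obstacle is the third ingredient above: on a curved manifold the gradient no longer commutes with the Laplacian, and the precise form of the correction is a curvature term. Handling this carefully --- establishing the Bochner--Weitzenb\"ock identity with the sign convention matching the definition $\square^M=-\Delta^M+{\rm Ric}^M$, and keeping track of the sign introduced by the time reversal --- is where the analysis must be done with care. It is precisely this curvature term that distinguishes~(\ref{M-Burgers}) from its flat counterpart~(\ref{Euclidean-Burges}), where $\nabla$ and $\Delta$ commute and no Ricci term appears.
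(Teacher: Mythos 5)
Your proposal is correct and follows essentially the same route as the paper's proof: both obtain the HJB equation and the optimal control $u^*=-\nabla^M W$ by minimizing the quadratic expression in $v$ in the dynamic programming equation of Theorem~\ref{Bellman}, and both derive the Burgers equation by applying $\nabla^M$ to the HJB equation and invoking the commutation (Bochner--Weitzenb\"ock) identity $\Delta^M\nabla^M W=\nabla^M\Delta^M W+{\rm Ric}^M(\nabla^M W)$ together with $\tfrac12\nabla^M|\nabla^M W|^2=\nabla^{TM}_{\nabla^M W}\nabla^M W$ and the time reversal $v(t,x)=\nabla^M W(T-t,x)$. Your sign bookkeeping for the curvature term and the time reversal matches the paper's equations (\ref{ch2-eq:9b})--(\ref{commutative-2}).
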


\begin{proof} The Bellman stochastic dynamic programming principle on $M$ says that the value function $W$ satisfies the corresponding dynamic programming equation 
\begin{eqnarray}\label{ch2-eq:9"}
       0=\inf_{v\in \mathcal{U}^M}\left\{\partial_tW+\nu\Delta^M W+\langle v, \nabla^M W\rangle_{T_x M}+\frac{1}{2}|v|_{T_x M}^2-V(x)\right\}.
\end{eqnarray}

Using the Riemannian inner product on the tangent space,  we have
\begin{eqnarray*}
& &\inf_{v\in \mathcal{U}^M}\left\{\partial_tW+\nu\Delta^M W+\langle v, \nabla^M W\rangle_{T_x M}+\frac{1}{2}|v|^2_{T_{x} M}-V(x)\right\}\\
	&=&\partial_tW+\nu\Delta^M W+\inf_{v\in \mathcal{U}^M}\left\{\langle v, \nabla^M W\rangle_{T_x M}+\frac{1}{2}|v|^2_{T_{x} M}\right\}-V(x)\\\nonumber
&=&\partial_tW+\nu\Delta^M W-\frac{1}{2}|\nabla^M W|^2_{T_{x} M}-V(x). 
\end{eqnarray*}
This proves that $W$ satisfies the Hamilton-Jacobi-Bellman equation $(\ref{ch2-eq:9})$ on $M$. 
Moreover, from the above argument, we see that the optimal Markov control $u^*$ is given by
\begin{eqnarray*}
u^*=-\nabla^M W.
\end{eqnarray*}

Assuming that $W\in C^{1, 3}([0, T]\times M)$, and taking the covariant differentiation 
on both sides of  the Hamilton-Jacobi-Bellman equation  $(\ref{ch2-eq:9})$, we have

\begin{eqnarray}
\partial_t \nabla^M W+\nu\nabla^M\Delta^M W-\frac{1}{2}\nabla^M |\nabla^M W|_{T_xM}^2-\nabla^M V(x)=0. \label{ch2-eq:9b}
\end{eqnarray}

Using the commutation formula in differential geometry
\begin{eqnarray}
\Delta^M \nabla^M  W=\nabla^M \Delta^M W+{\rm Ric}^M(\nabla^M W), \label{commutative-1}
\end{eqnarray} 
and the geometric formula
\begin{eqnarray}
{1\over 2}\nabla^M |\nabla^M W|_{T_xM}^2=\langle\nabla^M\nabla^M W, \nabla^M W\rangle=\nabla^{TM}_v v, \label{commutative-2}
\end{eqnarray}
we derive that $v(t, x)=\nabla^M W(T-t, x)$ satisfies the viscous Burgers equation $(\ref{M-Burgers})$. 
\end{proof}

\subsection{Regularity of solution to HJB equation on $M$}

Note that $(\ref{M-Burgers})$ is a system of quasi-linear   parabolic partial differential equations on compact Riemannian manifolds. By standard argument as used in \cite{Taylor} on quasi-linear parabolic PDEs, we can prove the local 
existence and uniqueness of the Cauchy problem to the Burgers equation within 
suitable Sobolev space on a compact Riemannian  manifold $M$.

Following \cite{Fleming-Soner}, under the Cole-Hopf  transformation
\begin{eqnarray}
\Phi(t, x)=\exp{\left[-{W(t, x)\over {2\nu}}\right]}, ~~\varphi(x)=\exp[-\Psi(x)], \label{Hopf}
\end{eqnarray}
the  Hamilton-Jacobi-Bellman equation  $(\ref{ch2-eq:9})$ becomes the heat equation
\begin{eqnarray}
\partial_t\Phi+\nu\Delta^M\Phi+\frac{1}{2\nu}V\Phi=0,\ 
\ \ \Phi(T, x)=\varphi(x), \label{HeatV}
\end{eqnarray}
which is the backward heat equation with a potential term $\frac{1}{2\nu}V\Phi$. 

Conversely, we can use the inverse of the Cole-Hopf  transformation $(\ref{Hopf})$  to derive the Hamilton-Jacobi-Bellman equation  $(\ref{ch2-eq:9})$ from the heat equation $(\ref{HeatV})$ provided that $\Phi$ is regular. Using the Feynman-Kac formula for Eq. $(\ref{HeatV})$,  it is well-known that $\Phi\in C^{1, 2}([0, T]\times M)$ provided that $\varphi\in C^2(M)$. Indeed, we have the following

\begin{theorem} \label{M-continuity}Suppose that the terminal value function $x\mapsto \Psi(x)$ is $C^k$-smooth. Then 
 the value function $x\mapsto W(t, x)$ is also  $C^k$-smooth on $M$.
\end{theorem}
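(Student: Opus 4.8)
The plan is to route the regularity question through the Cole--Hopf transformation (\ref{Hopf}), which linearizes the Hamilton--Jacobi--Bellman equation (\ref{ch2-eq:9}) into the heat equation (\ref{HeatV}). Concretely, let $\Phi$ be the solution of (\ref{HeatV}) with terminal datum $\varphi=e^{-\Psi}$; by the uniqueness (verification) principle for (\ref{ch2-eq:9}) already noted above, the value function is recovered as $W=-2\nu\log\Phi$. Since $\log$ is $C^\infty$ on $(0,\infty)$, it therefore suffices to show that for each fixed $t\in[0,T]$ the function $x\mapsto\Phi(t,x)$ is $C^k$ and strictly positive: the composition $W(t,\cdot)=-2\nu\log\Phi(t,\cdot)$ is then $C^k$ by the chain rule, which is exactly the claim.

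First I would transfer the hypothesis to the transformed data: as $\Psi\in C^k(M)$ and $\exp$ is smooth, $\varphi=e^{-\Psi}\in C^k(M)$, and compactness of $M$ together with continuity of $\Psi$ yields constants $0<c\le C$ with $c\le\varphi\le C$. Positivity of $\Phi$ then comes for free from the Feynman--Kac representation
\begin{eqnarray*}
\Phi(t,x)=\mathbb{E}_x\left[\varphi(\xi_T)\exp\left(\frac{1}{2\nu}\int_t^T V(\xi_s)\,ds\right)\right],
\end{eqnarray*}
where $\xi_s$ is the diffusion on $M$ generated by $\nu\Delta^M$ and started at $x$ at time $t$; since $\varphi\ge c>0$, the potential $V$ is bounded on the compact $M$, and the exponential weight is positive, one obtains the two-sided bound
\begin{eqnarray*}
0<c\,e^{-\|V\|_\infty(T-t)/(2\nu)}\le\Phi(t,x)\le C\,e^{\|V\|_\infty(T-t)/(2\nu)}.
\end{eqnarray*}
The same positivity also follows from the parabolic maximum principle.

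The heart of the proof, and the step I expect to be the main obstacle, is the $C^k$ spatial regularity of $\Phi$ up to and including the terminal time. I would obtain this by the semigroup/Duhamel method. Reversing time by $\tau=T-t$ turns (\ref{HeatV}) into the forward problem $\partial_\tau\Phi=\nu\Delta^M\Phi+\frac{1}{2\nu}V\Phi$ with $\Phi|_{\tau=0}=\varphi$, and writing $P_\tau=e^{\tau\nu\Delta^M}$ for the heat semigroup on the compact manifold $M$ one has
\begin{eqnarray*}
\Phi(T-\tau,\cdot)=P_\tau\varphi+\frac{1}{2\nu}\int_0^\tau P_{\tau-s}\big(V\,\Phi(T-s,\cdot)\big)\,ds.
\end{eqnarray*}
Two properties of $P_\tau$ drive a fixed-point/bootstrap argument in $C^k(M)$: it maps $C^k(M)$ boundedly into itself and is strongly continuous there, so $P_\tau\varphi\to\varphi$ in $C^k$ as $\tau\to0$ and the regularity persists uniformly down to $t=T$; and for $\tau>0$ the smoothing estimate $\|P_\tau f\|_{C^k}\lesssim\tau^{-m/2}\|f\|_{C^{k-m}}$ lets one distribute spatial derivatives between the kernel and the factor $V\Phi$ so that the time integral converges. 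Standard parabolic Schauder or $L^p$ estimates give the same conclusion. The delicate point is that taking $k$ spatial derivatives of the Duhamel term forces derivatives onto $V$, so the bootstrap genuinely requires the potential to be regular enough --- for instance $V\in C^{k-1}$, which is consistent with the quoted case $\varphi\in C^2$, $V\in C^1\Rightarrow\Phi\in C^{1,2}$ --- and absent such regularity the statement should be read with the coefficients as smooth as the data.

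Combining the two steps, $\Phi(t,\cdot)\in C^k(M)$ with $\Phi>0$ for every $t\in[0,T]$, so $W(t,\cdot)=-2\nu\log\Phi(t,\cdot)\in C^k(M)$, which proves the theorem. Finally I would remark that for $t<T$ the smoothing property in fact upgrades $\Phi$, and hence $W$, to $C^\infty$ in $x$, so the exact finite order $k$ is felt only at the terminal time $t=T$, where by construction $W(T,\cdot)=\Psi\in C^k$.
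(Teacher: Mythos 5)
Your proposal is correct and follows exactly the route the paper itself indicates: the authors omit the proof as ``standard'', but the paragraph preceding the theorem points to the Cole--Hopf transformation $(\ref{Hopf})$ and the Feynman--Kac formula for the linearized equation $(\ref{HeatV})$, which is precisely your argument (positivity and two-sided bounds on $\Phi$ from Feynman--Kac, $C^k$ spatial regularity of $\Phi$ up to the terminal time via Duhamel and the smoothing/strong-continuity properties of the heat semigroup, then $W=-2\nu\log\Phi$ by a verification theorem). Your added caveat that the bootstrap genuinely requires the potential $V$ to be correspondingly regular (e.g.\ $V\in C^{k-1}$), a hypothesis the paper's statement suppresses, is a fair and worthwhile observation rather than a defect of your proof.
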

\begin{proof} The proof is standard and is omitted. \end{proof}

\section{Dynamic programming principle on $SG={\rm SDiff}(M)$}

In this section, we give the  Bellman  dynamic programming principle on the group of volume preserving diffeomorphisms. 

Let $(\Omega, \mathcal{F}, (\mathcal{F}_s), \mathbb{P})$ be a filtered probability space satisfying the usual condition. Let $M$ be a $d$-dimensional compact Riemannian manifold, $SG={\rm SDiff}(M)$ the group of volume preserving diffeomorphisms on $M$, and  $\mathcal{SG}=T_e(SG)$ its Lie algebra, i.e.,  
$$\mathcal{SG}=\left\{X\in \Gamma(TM): {\rm div}\hspace{0.2mm}X=0\right\}$$ 
which is the set of divergence free smooth vector fields on $M$. 

Let $Q$ be  a symmetric non-negative Hilbert-Schmidt operator on $\mathcal{SG}$ equipped with the $L^2$-Riemannian metric defined in Section $2$. 

The following definition of $\mathcal{SG}$-valued $Q$-Wiener process is standard. 

\begin{definition} \label{QBM} A continuous $\mathcal{SG}$-valued stochastic process $B^\mathcal{SG}_s$ is called a $Q$-Wiener process or $Q$-Brownian motion if\\

(1) $B^\mathcal{SG}_s=0$ a.s., \\

(2) $B^\mathcal{SG}_s$ has independent increments $B^\mathcal{SG}_{st}=B^\mathcal{SG}_t-B^\mathcal{SG}_s$ for all $0\leq s<t$, \\
 
(3) the increments have the following Gaussian laws: $\mathbb P\circ (B^\mathcal{SG}_{st})^{-1}=
\mathcal N(0, (t-s)Q)$ for all $0\leqslant s< t$.
\end{definition}

Let $\{g(s), s\in [t, T]\}$ be defined as the solution to the following Stratonovich stochastic differential equation on $SG$
\begin{eqnarray} \label{SDEG-1}
      \begin{cases}
      $$dg(s)=b(g(s), U(s))ds+\sigma(g(s), U(s))\circ dB^\mathcal{SG}_s,$$&\mbox{}\\
      $$g(t)=g,$$&\mbox{}
      \end{cases}
\end{eqnarray}
where $b: T(SG)\rightarrow T(SG)$, $\sigma:T(SG)\rightarrow \mathcal {L}(T(SG);T(SG))$ satisfy the Lipschitz condition, $T(SG)$ is the tangent 
bundle on $SG$, $\mathcal {L}(T(SG);T(SG))$ is the bounded linear operator, and $U(s)$ 
is a $T_{g(s)}(SG)$-valued $(\mathcal{F}_s)$-adapted process along the diffusion process $g(s)$ on $SG$. 
We call such  $U$ an admissible control on SDE $(\ref{SDEG-1})$ on $SG$.

Following \cite{Nelson:79}, see also \cite{Cipriano-Cruzeiro:07, Arnaudon-Cruzeiro:12, ACF}, the Nelson derivative of the diffusion process $g(t)$ on $SG$ is defined as follows
\begin{eqnarray*}
D_tg(t):=\lim\limits_{\varepsilon\rightarrow 0} \mathbb{E}\left[\left. {//^{g}_{t+\varepsilon\rightarrow t} \circ dg(t+\varepsilon)-dg(t)\over \varepsilon} \right|\mathcal{F}_t\right]
\end{eqnarray*}
where $//^{g}_{t+\varepsilon\rightarrow t}: T_{g(t+\varepsilon)}(SG)\rightarrow T_{g(t)}(SG)$ denotes the stochastic parallel transport from $T_{g(t+\varepsilon)}(SG)$ to $T_{g(t)}(SG)$. Indeed, it is well-known that 
$$
D_t g(t)=b(g(t), U(t)).$$

Let  
$$\mathcal{U}^{SG}=\{U: [0, T]\times SG \rightarrow T(SG)\ \ {\rm adapted\ to}\ (\mathcal{F}_s):  U(s, g(s))\in T_{g(s)}(SG) \}$$ be the set of admissible controls on $SG$. For simplicity,  we use $\mathbb{E}_{t, g}[\cdots]$ instead of $\mathbb{E}\left[\left.\cdots\right|g(t)=g\right].$

We introduce the value function on  $[t, T]\times SG$ as follows
\begin{eqnarray}\label{ch2-eq:11}
W(t, g)=\inf\limits_{U\in\mathcal{U}^{SG}}\mathbb{E}_{t, g}\left[\int_t^TL(s, g(s), U(s))ds+\Psi(g(T))\right].
\end{eqnarray}

In \cite{FGS}, the authors discussed the
 connection between infinite dimensional stochastic optimal control problems and HJB equations in Hilbert spaces, which extend the well-known results in the finite dimensional Euclidean spaces \cite{Fleming-Rishel, Fleming-Soner, Nisio, Yong-Zhou:99}. 

Following \cite{FGS, Fleming-Rishel, Fleming-Soner, Nisio, Yong-Zhou:99}, we can extend the Bellman dynamic programming principle and the dynamic programming equation to the infinite dimensional group of volume preserving diffeomorphisms.  The proofs can be given following standard argument as 
used in \cite{FGS} and in
\cite{Fleming-Rishel, Fleming-Soner, Nisio, Yong-Zhou:99} for the finite dimensional case. To save the length of the paper, we omit the proofs.

\begin{theorem}\label{DPPG}
Let $\Psi\in C(SG, \mathbb{R})$ and $W(T,g)=\Psi(g)$, $g\in SG$. For $h$ small enough such that $0\leq h\leq T-t$, we have
\begin{eqnarray}\label{ch2-eq:12}
W(t, g)=\inf\limits_{U\in\mathcal{U}^{SG}}\mathbb{E}_{t, g}\left[\int_t^{t+h}L(s,g(s), U(s))ds+W(t+h, g(t+h))\right].
\end{eqnarray}
Furthermore, assume that $W\in C^{1, 2}([0, T]\times SG, \mathbb{R})$, then $W$ is a solution of the Bellman dynamic programming equation

\begin{eqnarray}\label{HJB-4}
0=\inf\limits_{v\in\mathcal{U}^{SG}} \left[A^v W(t, g)+L(t, g, v)\right],
\end{eqnarray}
with the terminal condition
\begin{eqnarray}\label{HJB-6}
W(T, g)=\Psi(g),
\end{eqnarray}
where
\begin{eqnarray}
A^v=\partial_t+b(t,g,v)\cdot \nabla^{SG}+
\frac{1}{2}{\rm Tr}[(\sigma(t,g,v)Q^{\frac{1}{2}})^*(\sigma(t,g,v)Q^{\frac{1}{2}})(\nabla^{SG})^2]
\label{HJB-5}
\end{eqnarray}
and $(b(t,g,v)\cdot \nabla^{SG}) W= \langle\langle b(t, g, v), \nabla^{SG} W(t, g)\rangle \rangle_{T_g(SG)}$ and $\langle\langle \cdot, \cdot\rangle \rangle_{T_g(SG)}$ is the right-invariant $L^2$-Riemannian inner product on $SG={\rm SDiff}(M)$.
\end{theorem}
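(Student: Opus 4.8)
The plan is to establish the two assertions in the order stated: first the dynamic programming principle $(\ref{ch2-eq:12})$, which is a purely probabilistic identity relying only on the Markov property of the controlled diffusion $g(s)$ on $SG$, and then to deduce the Bellman equation $(\ref{HJB-4})$ from it by an infinitesimal It\^o argument under the extra smoothness hypothesis $W\in C^{1,2}$. The terminal condition $(\ref{HJB-6})$ is immediate from the definition of $W$ at $s=T$.

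For the dynamic programming principle I would split the cost functional over $[t,T]$ into its contributions over $[t,t+h]$ and $[t+h,T]$ and prove the two inequalities separately. The inequality $W(t,g)\geq \inf_U \mathbb{E}_{t,g}[\cdots]$ (with $\cdots$ the bracketed quantity in $(\ref{ch2-eq:12})$) follows by taking an arbitrary admissible $U\in\mathcal{U}^{SG}$, conditioning on $\mathcal{F}_{t+h}$, and applying the tower property of conditional expectation: since the restriction of $U$ to $[t+h,T]$ is an admissible control for the problem restarted at time $t+h$ from the state $g(t+h)$, the inner conditional expectation is bounded below by $W(t+h,g(t+h))$, and here the Markov property of $g(s)$ on $SG$ is essential. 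The reverse inequality is the more delicate one; it requires pasting a control that is nearly optimal on $[t,t+h]$ together with an $\varepsilon$-optimal control for the problem restarted at $(t+h,g(t+h))$, which in turn demands a measurable-selection argument to choose the restarted controls in an $\mathcal{F}_{t+h}$-measurable way. I would carry this out following the standard scheme of Fleming-Soner and Yong-Zhou \cite{Fleming-Soner, Yong-Zhou:99}, adapted to the infinite-dimensional state space $SG$.

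Granting $(\ref{ch2-eq:12})$ and $W\in C^{1,2}([0,T]\times SG)$, I would derive $(\ref{HJB-4})$ as follows. Fix $g$ and a constant control $v\in T_g(SG)$; applying It\^o's formula on the Hilbert manifold $SG$ to $s\mapsto W(s,g(s))$ along the solution of $(\ref{SDEG-1})$, converting the Stratonovich differential $\circ\, dB^{\mathcal{SG}}_s$ into its It\^o form, and taking $\mathbb{E}_{t,g}$ so that the stochastic integral against $B^{\mathcal{SG}}_s$ drops out, gives $\mathbb{E}_{t,g}[W(t+h,g(t+h))]=W(t,g)+\mathbb{E}_{t,g}\int_t^{t+h}(A^v W)(s,g(s))\,ds+o(h)$, where $A^v$ is precisely the second-order generator $(\ref{HJB-5})$, the quadratic-variation contribution producing the trace term because $B^{\mathcal{SG}}_s$ is a $Q$-Wiener process. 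Substituting this into $(\ref{ch2-eq:12})$, subtracting $W(t,g)$, dividing by $h$ and letting $h\to 0$ yields $0\leq (A^v W)(t,g)+L(t,g,v)$ for every $v$, hence $0\leq \inf_v[\cdots]$; repeating the expansion along a near-optimal control gives the matching reverse inequality, so the equality $(\ref{HJB-4})$ holds.

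The principal obstacle is analytic rather than conceptual: making It\^o's formula and the attendant stochastic calculus rigorous on the infinite-dimensional group $SG={\rm SDiff}(M)$ equipped with its right-invariant $L^2$ metric, where one must control the trace-class structure of the $Q$-Wiener process $B^{\mathcal{SG}}_s$ and verify that the second-order term in $(\ref{HJB-5})$ is well defined --- this is exactly why $Q$ is required to be Hilbert-Schmidt. Related technical points are the measurable selection in the reverse dynamic programming inequality and the a priori regularity $W\in C^{1,2}$, which is assumed here but is itself delicate in infinite dimensions. I would dispose of these by transcribing the Hilbert-space framework of Fabbri-Gozzi-Swiech \cite{FGS} to the present manifold setting, reducing the local computations to a chart around the identity in which $SG$ is modeled on the Hilbert space $\mathcal{SG}$ of divergence-free vector fields.
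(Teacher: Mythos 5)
Your proposal is correct and follows exactly the route the paper intends: the paper in fact omits the proof of Theorem \ref{DPPG} entirely, stating only that it ``can be given following standard argument as used in \cite{FGS} and in \cite{Fleming-Rishel, Fleming-Soner, Nisio, Yong-Zhou:99}'', and your two-inequality dynamic programming argument (tower property plus measurable selection) followed by the It\^o expansion with the $Q$-Wiener quadratic-variation term producing the generator $(\ref{HJB-5})$ is precisely that standard scheme transplanted to $SG$. Your closing remarks on the genuine technical obstacles (It\^o calculus on $SG$, trace-class structure of $Q$, a priori $C^{1,2}$ regularity) accurately identify what the paper itself leaves unaddressed by deferring to the Hilbert-space framework of \cite{FGS}.
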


To simplify the notation, for fixed $Q$, we introduce the $Q$-Laplacian on $SG$ as follows
\begin{eqnarray}
\Delta_Q^{SG}=
\frac{1}{2}{\rm Tr}[(\sigma(t,  g, v)Q^{\frac{1}{2}})^*(\sigma(t,  g,  v)Q^{\frac{1}{2}})(\nabla^{SG})^2]. \label{DeltaQG}
\end{eqnarray}

Let $\mathcal{V}\in C^1(SG, \mathbb{R})$. 
Consider the Lagrangian action function
\begin{eqnarray*}
L(s,  g(s),U(s))=\frac{1}{2}\|U(s)\|_{T_{g(s)}(SG)}^2-\mathcal{V}(g(s)),
\end{eqnarray*}
where $\|\cdot \|_{T_{g(s)}(SG)}$ is the right-invariant $L^2$-Riemannian metric on $SG={\rm SDiff}(M)$.  By the dynamic programming equation $(\ref{HJB-4})$ derived from
Theorem \ref{DPPG},
we have
\begin{eqnarray}\label{HJB-7}
0&=&\inf_{v\in\mathcal{U}^{SG}}\left\{\partial_tW+\nu\Delta_Q^{SG}W+\langle \langle v,\nabla^{SG}W\rangle \rangle_{T_g(SG)}+\frac{1}{2}\|v\|_{\mathcal{SG}}^2-\mathcal{V}(g)\right\} \\ \nonumber
&=&\partial_tW+\nu\Delta_Q^{SG}W-\frac{1}{2}\|\nabla^{SG}W\|_{\mathcal{SG}}^2-\mathcal{V}(g),
\end{eqnarray}
with terminal condition (\ref{HJB-6}). Moreover, the optimal Markov control $U$ is given by
\begin{eqnarray}\label{HJB-8}
U(t, g)=-\nabla^{SG}W(t, g),
\end{eqnarray}
where $\nabla^{SG}$ is the gradient on $SG$, and $\Delta_Q^{SG}$ is the $Q$-Laplacian on $SG$.

In \cite{FGS}, the authors also studied the existence, uniqueness results of viscosity solutions (Theorem 3.66, P231 and  Theorem 3.67,  P237 in \cite{FGS}) and mild solutions (Theorem 4.80, P437, Theorem 4.85, P444 and Theorem 4.90, P445 in \cite{FGS}) of the HJB equation in Hilbert spaces.

\section{New derivation of the incompressible Navier-Stokes equations}

In this section, we first derive the Hamilton-Jacobi-Bellman and the viscous Burgers equations on the infinite dimensional group of volume preserving diffeomorphisms. By pushing forward  to the underlying manifold, and using the Hodge decomposition theorem, we then derive the incompressible Navier-Stokes equations on compact Riemannian manifolds.

\subsection{The idea for the derivation of incompressible Navier-Stokes equations}

In Section $2$, we derive the viscous Burgers equation $(\ref{M-Burgers})$ 
 on compact Riemannian  manifolds from the Bellman dynamic programming principle, which, at least in the case of bounded domain in Euclidean space,  has the similar form as  the incompressible Navier-Stokes equations except the divergence-free constraint condition ${\rm div}\hspace{0.3mm}~u=0$. In a series works of Cruzeiro and her collaborators \cite{Cipriano-Cruzeiro:07, CS21}, they developed a stochastic variational principle for the action 
 \begin{eqnarray}
       \mathcal{A}(x^u, u)&=&\mathbb{E}\left[\int_0^T \frac{1}{2}\|D_t x^u(t)\|_{L^2(M)}^2 dt\right]\nonumber\\
       &=&\mathbb{E}\left[\int_0^T \int_M\frac{1}{2} |D_t x^u(t)(x)|^2 dv dt\right]\label{Action-EL}
\end{eqnarray}
among the diffusion processes $\{x^u(t), t\in [0, T]\}$ defined by  SDE $(\ref{SDEM})$ with control $u\in \mathcal{U}^M$. More precisely, 
Cipriano and Cruzeiro \cite{Cipriano-Cruzeiro:07} proved that the optimal control $u\in C^{1, 2}([0, T]\times M, \Gamma(TM))$ for the Euler-Lagrangian equation of the Lagrangian action $\mathcal{A}$ on the two-dimensional torus $\mathbb{T}^2$ satisfies the incompressible Navier-Stokes equations
\begin{eqnarray*}
\partial_tu+u\cdot \nabla u-\nu \Delta u &=& -\nabla p,\\
{\rm div}\hspace{0.3mm}~u &=& 0.
￼\end{eqnarray*}
See further works in \cite{Arnaudon-Cruzeiro:12, ACF, Fang-Luo:15, Luo:15}  for the extension of this idea to compact Riemannian manifolds. 

 Let $\mathcal{V}: SG\rightarrow \mathbb{R}$ be a $C^1$-continuous function. 
We introduce the Lagrangian action function on $SG={\rm SDiff}(M)$
\begin{eqnarray}
       \mathcal{L}(g^U,U)
       =  \int_M\frac{1}{2} |D_s g^U(s)(x)|^2dv-
       \mathcal{V}(g^U(s)),
       \label{Action-G}
\end{eqnarray}
where $g^U(s)$ is the horizontal lift of the  solution $x^u(s)$ of the Controlled SDE $(\ref{SDEM})$ on  $SG={\rm SDiff}(M)$.

Let $\Psi\in C(SG, \mathbb{R})$. We introduce the  value function $W$ on $SG={\rm SDiff}(M)$
\begin{eqnarray}
     W(t, g)=\inf\limits_{U\in \mathcal{U}^{SG}}\mathbb{E}_{t, g}\left[
     \int_t^T  \left(  \int_M\frac{1}{2} |D_s g^U(s)(x)|^2dv-\mathcal{V}(g^U(s)) \right)ds+\Psi(g^U(T))\right].
\label{WG}
\end{eqnarray}
Here and throughout this paper, $\mathbb{E}_{t, g}[\cdots ]=\mathbb{E}\left[\left. \cdots \right|g^U(t)=g\right]$.

By the Bellman dynamic programming principle on $SG={\rm SDiff}(M)$ in Section $3$, we have
\begin{eqnarray}\label{ch2-eq:9"}
       0&=&\inf_{v\in T_g(SG)}\left\{\partial_tW+\nu\Delta^{SG}W+\langle\langle v, \nabla^{SG}W\rangle\rangle_{T_g(SG)}+\frac{1}{2}\|v\|_{T_g (SG)}^2-\mathcal{V}(g)
       \right\}\\\nonumber
         &=&\partial_tW+\nu\Delta^{SG} W+\inf_{v\in T_g (SG)}\left\{\langle\langle v, \nabla^{SG} W\rangle\rangle_{T_g (SG)}+\frac{1}{2}
         \|v\|_{T_g(SG)}^2\right\} -\mathcal{V}(g),
\end{eqnarray}
where $\nabla^{SG}$ and $\Delta^{SG}$ denote the Riemannian gradient operator and the weighted 
Laplace-Beltrami  (i.e., the Ornstein-Uhlenbeck) type operator on the infinite dimensional Riemannian manifold  $SG={\rm SDiff}(M)$. 

Then, by a similar argument as used in the derivation of the viscous Burgers equation $(\ref{M-Burgers})$ on $M$,  we can prove that the value function $W$ satisfies the  infinite dimensional Hamilton-Jacobi-Bellman equation on $SG={\rm SDiff}(M)$ 
\begin{eqnarray}
\partial_tW+\nu\Delta^{SG}W-\frac{1}{2}\|\nabla^{SG}W\|^2_{T_{g} (SG)}-\mathcal{V}=0, \label{HJBG}
\end{eqnarray}
with the terminal condition $W(T, g)=\Psi(g)$. Moreover, the optimal Markov control $U$ is given by
\begin{eqnarray*}
U=-\nabla^{SG} W.
\end{eqnarray*}
By the same argument as explained in the derivation of the viscous Burgers equation $(\ref{M-Burgers})$, we will prove that $U$ 
satisfies the viscous Burgers  equation on $SG$.

Furthermore, by the Hodge decomposition theorem, and by pushing forward $(\ref{HJBG})$  to $M$ via the evaluation map $e_x: SG\rightarrow M, g\mapsto g(x)$, we can derive that $u=(de_x) U$ satisfies the incompressible Navier-Stokes equations on 
the underlying Riemannian manifold $(M, g)$. 

This gives a new derivation of the incompressible Navier-Stokes equations
via the Bellman dynamic programming principle 
on the infinite dimensional group $SG={\rm SDiff}(M)$. In particular, when the viscosity coefficient $\nu$ vanishes, we give a new derivation of the incompressible Euler equation 
via the deterministic dynamic programming principle 
on the infinite dimensional group $SG={\rm SDiff}(M)$.

\subsection{The Hamilton-Jacobi-Bellman equation on $SG={\rm SDiff}(M)$}

In this subsection, we introduce a controlled SDE on the group  $SG={\rm SDiff}(M)$ of volume preserving diffeomorphisms and derive the Hamilton-Jacobi-Bellman equation on $SG={\rm SDiff}(M)$. 

Let $\{g^U(t), t\in [0, T]\}$ be a diffusion process with values in $SG={\rm SDiff}(M)$ and satisfies the following  Stratonovich 
stochastic differential equation (briefly, SDE) with right action on $SG={\rm SDiff}(M)$
	\begin{eqnarray}
	      \begin{cases}
	      $$dg^U(t)=\left(\sqrt{2\nu}\circ dW_t\right) g^U(t)+U(t, g^U(t))dt,$$ &\mbox{$$}\\
	      $$g^U(0)=e,$$&\mbox{$$}
	      \end{cases}
	      \label{SDEG}
	\end{eqnarray}
where $W_t$ is a $\mathcal{SG}$-valued $Q$-Brownian motion (see Definition \ref{QBM}), $U$ is a horizontal lift of 
a smooth divergence-free vector field $u$ on $M$ to $\mathcal{SG}$ (see Proposition \ref{U-u} below), and $e=id$ is the identity map of $SG={\rm{SDiff}} (M)$.  The short time existence and uniqueness of solution to SDE $(\ref{SDEG})$ follows from standard Picard iteration as in It\^o's SDE theory on Euclidean spaces.

Let $\{W_i, i = 1,2,\cdots\}$ be i.i.d. Brownian motions, $\{\widetilde{A}_i, i = 1,2,\cdots\}$ and $U$ be smooth vector fields on $SG={\rm SDiff(M)}$. 
Suppose that the following SDE
 \begin{equation}\label{ch2-SDEG}
   \begin{cases}
    $$dg(s)=\sqrt{2\nu}\sum\limits_{i=1}^\infty \widetilde{A}_i(g(s))\circ dW_i(s)+U(s, g(s))ds,$$
    &\mbox{$s\in(t,T]$}\\
    $$g(t)=g$$ &\mbox{}
   \end{cases}
\end{equation}
 has a unique strong solution.

Let $e_x$ be the evaluation map, $e_x: SG\rightarrow M,~~ e_x(g)=g(x)$,
~~$de_x$ be the differential of $e_x$ at the point $g$ 
$$de_x: T_g(SG)\rightarrow T_{g(x)}M.$$ 
The following proposition gives the relation between $u(x)\in T_x M$ and $U\in T_e(SG)$.  It can be easily obtain by definition, and we omit the proof.
\begin{proposition}\label{U-u} Given any $U\in \mathcal{SG}=T_e(SG)$, we have
$$U(x)=(de_x)U, \ \ \forall x\in M.$$
Indeed, let $u(x)=(de_x)U$, we have
$$U(x)=u(x).$$
That is to say, the tangent vector $U\in\mathcal{SG}=T_e(SG)$ must be given by  $U=\{u(x): x\in M\}. $
\end{proposition}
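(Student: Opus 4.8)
The plan is to unwind the two meanings carried by the symbol $U$: on the one hand $U$ is an abstract tangent vector to the infinite-dimensional manifold $SG={\rm SDiff}(M)$ at the identity $e$, and on the other hand $U$ is the concrete divergence-free vector field on $M$ that this tangent vector represents (recall $\mathcal{SG}=T_e(SG)=\{X\in\Gamma(TM):{\rm div}\hspace{0.2mm}X=0\}$ from Section 3). The claimed identity $U(x)=(de_x)U$ then reduces to checking that these two interpretations agree pointwise through the evaluation map, so the whole argument is a definition-chase.

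First I would represent $U\in\mathcal{SG}=T_e(SG)$ as the velocity vector of a smooth curve in $SG$. Concretely, let $s\mapsto g_s\in SG$ be the flow generated by the vector field $U$, so that $g_0=e$ and $\frac{d}{ds}\big|_{s=0}g_s=U$; since ${\rm div}\hspace{0.2mm}U=0$ this flow preserves the volume measure and hence stays inside $SG$. By the standard Lie-algebra identification of $T_e(SG)$ with divergence-free vector fields, the field $U$ is recovered pointwise by
$$
U(x)=\frac{d}{ds}\Big|_{s=0}g_s(x)\in T_xM, \qquad \forall\, x\in M,
$$
using $g_0(x)=e(x)=x$.

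Next I would compute $(de_x)U$ directly from the definition of the differential of the evaluation map $e_x:SG\rightarrow M$, $e_x(g)=g(x)$, at $g=e$, where $de_x:T_e(SG)\rightarrow T_xM$. Applying the chain rule to the composed curve $s\mapsto e_x(g_s)=g_s(x)$ gives
$$
(de_x)U=(de_x)\frac{d}{ds}\Big|_{s=0}g_s=\frac{d}{ds}\Big|_{s=0}e_x(g_s)=\frac{d}{ds}\Big|_{s=0}g_s(x)=U(x).
$$
Comparing with the previous display yields $(de_x)U=U(x)$ for every $x\in M$; setting $u(x):=(de_x)U$ then gives $U(x)=u(x)$ and hence $U=\{u(x):x\in M\}$, which is exactly the assertion.

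I do not expect any serious obstacle: the statement is purely formal and the authors themselves note it is immediate. The only points that deserve mild care are confirming that the chosen curve $g_s$ remains volume-preserving (guaranteed by ${\rm div}\hspace{0.2mm}U=0$) and that the identification of $T_e(SG)$ being used is precisely the one recalled at the start of Section 3; once these are fixed, the differential of $e_x$ at $e$ is nothing but evaluation of the representing vector field at $x$.
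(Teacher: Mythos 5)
Your argument is correct and is precisely the definition-chase the authors have in mind: the paper omits the proof of Proposition \ref{U-u} as following ``by definition,'' and representing $U\in T_e(SG)$ by a curve $g_s$ with $g_0=e$ and then applying the chain rule to $s\mapsto e_x(g_s)=g_s(x)$ is exactly that omitted computation. Your added care about the curve staying in $SG$ (via $\mathrm{div}\,U=0$) is a reasonable, if inessential, refinement, since any curve in $SG$ with velocity $U$ at $s=0$ would serve.
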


By the chain rule of differential, we have 
$$d(e_x\circ R_g)_e=(de_x)_g(dR_g)_e.$$

\begin{lemma}
Suppose that there exists a family of vector fields $\{A_i\}_{i=1}^{\infty}$ on $M$ such that for all $g\in SG$, it holds 
$$A_i(g(x))=d(e_x\circ R_g)_e\widetilde{A}_i(e),$$
where $\widetilde{A}_i(e)\in\mathcal{SG}$ is the horizontal lift of $A_i(x)$ by $e_x$, and
$\widetilde{A}_i(g)$ is right-invariant, i.e., 
$$\widetilde{A}_i(g)=(dR_g)_e\widetilde{A}_i(e).$$
Then for any $f\in C^\infty(M,\mathbb{R})$, and $F_x=f\circ e_x$, it holds
$$(\mathcal{L}^{SG}F_x)(g)=(\mathcal{L}^Mf)(g(x)),$$
where  $\mathcal{L}^{SG}$ is the infinitesimal generator of SDE on $SG$
$$dg_t=\sum\limits_{i=1}^\infty \widetilde{A}_i(g_t)\circ dW_t^i+\widetilde{A}_0(g_t)dt, ~~g_0=g,$$
and $\mathcal{L}^M$ is the infinitesimal generator of SDE on $M$
$$dg_t(x)=\sum\limits_{i=1}^\infty A_i(g_t(x))\circ dW_t^i+A_0(g_t(x))dt, ~~g_0(x)=g(x).$$
\end{lemma}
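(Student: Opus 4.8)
The plan is to reduce everything to a single first-order intertwining identity for the evaluation map $e_x$ and then propagate it to the second-order (diffusion) term. First I would write both generators in H\"ormander (Stratonovich) form: since the SDEs are posed with Stratonovich differentials, the generators are
$$\mathcal{L}^{SG}=\widetilde{A}_0+\frac{1}{2}\sum_{i=1}^\infty \widetilde{A}_i^2,\qquad \mathcal{L}^M=A_0+\frac{1}{2}\sum_{i=1}^\infty A_i^2,$$
where each $\widetilde{A}_i$ (resp. $A_i$) acts as a first-order differential operator on $SG$ (resp. $M$) and $\widetilde{A}_i^2$ means applying the vector field twice. This clean form, with no It\^o correction and no cross terms (the $W^i$ being independent), is precisely why the equations are written in the Stratonovich sense, and it is what makes the computation transparent.

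The heart of the argument is the claim that for every $\phi\in C^\infty(M,\mathbb{R})$ one has the intertwining identity $\widetilde{A}_i(\phi\circ e_x)=(A_i\phi)\circ e_x$ as functions on $SG$. To prove it I would compute, by the chain rule applied to $\phi\circ e_x$,
$$\bigl(\widetilde{A}_i(\phi\circ e_x)\bigr)(g)=d\phi_{g(x)}\,(de_x)_g\,\widetilde{A}_i(g).$$
Now I invoke the right-invariance hypothesis $\widetilde{A}_i(g)=(dR_g)_e\widetilde{A}_i(e)$ together with the chain rule $d(e_x\circ R_g)_e=(de_x)_g(dR_g)_e$ stated above, so that $(de_x)_g\widetilde{A}_i(g)=d(e_x\circ R_g)_e\widetilde{A}_i(e)=A_i(g(x))$ by the standing assumption on the $A_i$. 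Substituting gives $\bigl(\widetilde{A}_i(\phi\circ e_x)\bigr)(g)=d\phi_{g(x)}A_i(g(x))=(A_i\phi)(g(x))$, which is exactly the claim.

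The key observation is that this identity is self-propagating: since $\widetilde{A}_i(\phi\circ e_x)=(A_i\phi)\circ e_x$ is again the $e_x$-pullback of a smooth function on $M$, I may apply $\widetilde{A}_i$ a second time to $f\circ e_x$ and obtain $\widetilde{A}_i^2(f\circ e_x)=\widetilde{A}_i\bigl((A_if)\circ e_x\bigr)=(A_i^2 f)\circ e_x$. Applying the same identity to the drift field $\widetilde{A}_0$, which I take to be right-invariant and related to $A_0$ by the same formula as for $i\geq 1$, and summing over $i$, yields $\mathcal{L}^{SG}(f\circ e_x)=(\mathcal{L}^M f)\circ e_x$; evaluating at $g$ gives $(\mathcal{L}^{SG}F_x)(g)=(\mathcal{L}^M f)(g(x))$, as desired.

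The step I expect to be the main obstacle is not the algebra but the functional-analytic justification in the infinite-dimensional setting: one must make sense of $(de_x)_g$ on the infinite-dimensional tangent space $T_g(SG)$, guarantee convergence of $\sum_{i=1}^\infty \widetilde{A}_i^2(f\circ e_x)$ and of the corresponding series on $M$, and verify that the generator of the $SG$-valued diffusion is genuinely the formal H\"ormander expression above rather than merely a formal one. These points are controlled by the standing assumption that SDE $(\ref{ch2-SDEG})$ admits a unique strong solution and that $\{\widetilde{A}_i\}$ is summable in the Hilbert-Schmidt sense dictated by $Q$; granting that, the geometric intertwining identity is what carries the proof, since the summation commutes with the (continuous, linear) pullback by $e_x$.
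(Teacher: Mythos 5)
Your proposal is correct and follows essentially the same route as the paper: establish the first-order intertwining identity $\widetilde{A}_i(f\circ e_x)=(A_if)\circ e_x$ from the chain rule, right-invariance and the standing assumption $A_i(g(x))=(de_x)_g\widetilde{A}_i(g)$, note that it self-propagates to $\widetilde{A}_i^2$, and sum in the H\"ormander form of the two generators. The paper's proof is exactly this computation (your remarks on the $\tfrac12$ normalization and on the infinite-dimensional summability are reasonable refinements the paper does not dwell on).
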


\begin{proof} By the fact $A_i(g(x))=d(e_x\circ R_g)_e\widetilde{A}_i(e)=(de_x)_g(dR_g)_e\widetilde{A}_i(e)=(de_x)_g\widetilde{A}_i(g)$, we have
$\widetilde{A}_iF_x(g)=(A_if)(g(x))=(A_if)(e_x(g))=(A_if)\circ e_x(g)$. Hence, \begin{eqnarray*}
\widetilde{A}_iF_x&=&(A_if)\circ e_x, \\
\widetilde{A}_i(\widetilde{A}_iF_x)&=&A_i(A_if)\circ e_x.
\end{eqnarray*}
This implies
\begin{eqnarray*}
(\mathcal{L}^{SG}F_x)(g)&=&\left.\frac{\partial}{\partial t}\mathbb{E}_g[F_x(g_t)]\right|_{t=0}\\
&=&\sum\limits_{i=1}^\infty \widetilde{A}_i^2F_x(g)+\widetilde{A}_0F_x(g)\cdot\nabla^{SG}\\
&=&\sum\limits_{i=1}^\infty  A_i^2f(g(x))+A_0f(g(x))\cdot\nabla^M\\
&=&\left.\frac{\partial}{\partial t}\mathbb{E}_{g(x)}[f(g_t(x))]\right|_{t=0}\\
&=&(\mathcal{L}^Mf)(g(x)).
\end{eqnarray*}
\end{proof}

Fix $x\in M$. Let $e_x: SG\rightarrow M$ be the evaluation map, and $g(s)(x)=e_x(g(s))$. By the It\^o formula, we can derive the SDE for $g(s)(x)$ on $M$ from  SDE $(\ref{ch2-SDEG})$  for $g(s)$ on $SG$ as follows
 \begin{equation}\label{ch2-SDED}
   \begin{cases}
    $$dg(s)(x)=\sqrt{2\nu}\sum\limits_{i=1}^\infty A_i(g(s)(x))\circ dW_i(s)+U(s, g(s)(x))ds,$$
    &\mbox{$s\in(t,T]$}\\
    $$g(t)(x)=g(x).$$ &\mbox{}
   \end{cases}
\end{equation}
Here the vector fields $\widetilde{A}_i$ and $U$ on $SG = {\rm SDiff (M)}$ and  $A_i$ and $u$ on $M$ satisfy the relationship 
\begin{eqnarray*}
& &(de_x )(g)U(g) = u(g(x)),\\
& &(de_x )(g)\widetilde{A}_i (g)=A_i (g(x)) , i=1, 2, \cdots,
\end{eqnarray*}
which is the case when $\widetilde{A}_i$ and $U$ are right-invariant on $SG$ in the sense that 
\begin{eqnarray*}
& &U(g) = dR_g U(e),
~\widetilde{A}_i(g) = dR_g\widetilde{A}_i(e), \ \ i = 1, 2, \ldots\\
& &(de_x) U(e)=u(x),
~(de_x) \widetilde{A}_i(e) = A_i(x), \ \ i=1, 2,\ldots .
\end{eqnarray*}

Since $A_i$ and $u$ are $C^\infty$ vector field on $M$, then SDE $(\ref{ch2-SDED})$ has a unique 
solution $g(s)(x)$, and $x\mapsto g(s)(x)$ is a diffeomorphism on $M$ using the theory of stochastic flows. See \cite{IW} and \cite{Kunita:90}. The infinitesimal generator of $g(s)(x)$ of SDE $(\ref{ch2-SDED})$ is given by
\begin{eqnarray*}
\mathcal{L}^M=\nu \sum\limits_{i=1}^\infty  A_i^2+u\cdot \nabla^M.
\end{eqnarray*}

We need  a family of  vector fields $\{A_i, i\in \mathbb{N}\}$ 
such that 
\begin{eqnarray}
\sum\limits_{i=1}^\infty  A_i^2&=& \Delta^M, \label{cA1}\\
{\rm div} A_i&=&0, \ \ \forall i\in \mathbb{N},\label{cA2}
\end{eqnarray}
where 
$\Delta^M$ is the Laplace-Beltrami operator on the Riemannian manifold $(M, g)$.   When $M=\mathbb{T}^d$,  this was proved by Cipriano-Cruzeiro \cite{Cipriano-Cruzeiro:07}. When 
$M=\mathbb{S}^d$, this was proved by Le Jan-Raimond \cite{LR02}, see also Fang-Zhang \cite{Fang-Zhang:06}, 
Fang-Luo  \cite{Fang-Luo:15} and Luo \cite{Luo:15}. In  \cite{ELL},  Elworthy, 
Le Jan and X.-M. Li proved the existence of such vector fields  on any compact Riemannian manifold equipped with a Le Jan-Watanabe connection, in particular, for gradient Brownian system on any compact Riemannian manifold equipped with a torsion-skew symmetric metric connection. In particular, such vector fields exist on any compact Riemannian manifold with the Levi-Civita connection.  More precisely, assuming that there exists a family of  vector fields $\{A_i, i\in \mathbb{N}\}$ such that 
\begin{eqnarray}
&&|v|^2_{T_xM}=\sum\limits_{i=1}^\infty \langle  A_i(x), v\rangle^2, \ \ \forall x\in M, v\in T_xM,\label{ELL1}\\
&&\sum\limits_{i=1}^\infty \nabla_{A_i}A_i =0, \label{ELL2}\\
&&\sum\limits_{i=1}^\infty A_i\wedge \nabla_X A_i=0, \ \ \forall  X\in \Gamma(TM), \label{ELL3}
\end{eqnarray}
then  for any differential form $\omega$ on $M$, it holds (see  Remark 2.3.1 in \cite{ELL})
\begin{eqnarray}
\square^M \omega=-\sum\limits_{i=1}^\infty L_{A_i}^2\omega. \label{ELL4}
\end{eqnarray}

The main result of this subsection is the following  Hamilton-Jacobi-Bellman equation on $SG={\rm SDiff}(M)$.

\begin{theorem}\label{HJBGthm} Let $g(s)$ be the solution to the controlled SDE $(\ref{SDEG})$. Let 
\begin{eqnarray}
W(t, g)
:=\inf\limits_{U\in\mathcal{U}^{SG}}\mathbb{E}_{t, g}\left[\int_t^TL(s, g(s), U(s))ds+\Psi(g(T))\right] \label{W-Gvaluefunction}
\end{eqnarray}
be the value function of the related stochastic control problem, where  the Lagrangian is given by 
$$L(s, g(s), U(s))=\frac{1}{2}\|U\|_{T_{g(s)}(SG)}^2-\mathcal{V}(g(s)).$$
Then  $W$ satisfies the Hamilton-Jacobi-Bellman equation on $SG=\operatorname{SDiff}(M)$
\begin{equation}\label{HJB-W}
   \partial_tW-\frac{1}{2}\|\nabla^{SG}W\|^2_{T_g(SG)}+\nu \Delta^{SG}W=\mathcal{V}(g), 
   \end{equation}
   with the terminal condition $W(T, g)=\Psi(g)$. 
   \end{theorem}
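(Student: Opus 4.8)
The plan is to specialize the abstract Bellman dynamic programming equation (\ref{HJB-4}) of Theorem \ref{DPPG} to the concrete controlled SDE (\ref{SDEG}) and the quadratic Lagrangian, and then to carry out a pointwise minimization over the control by completing the square, in exact parallel with the finite-dimensional computation in the proof of Theorem \ref{BurgersM}.

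First I would read off the coefficients of (\ref{SDEG}) by comparison with the general form (\ref{SDEG-1}): the drift is $b(g, U) = U$ and the diffusion is the right translation of $\sqrt{2\nu}\,dW_t$, i.e. $\sigma(g) = \sqrt{2\nu}\,dR_g$. Inserting these into the generator (\ref{HJB-5}), the first-order term becomes $\langle\langle U, \nabla^{SG} W\rangle\rangle_{T_g(SG)}$, while the second-order trace term becomes $\nu$ times the $Q$-Laplacian (\ref{DeltaQG}). At this point I would invoke the identification $\nu\Delta_Q^{SG} = \nu\Delta^{SG}$: using the right-invariant lifted divergence-free fields $\{\widetilde A_i\}$ of Proposition \ref{U-u} together with the conditions (\ref{cA1})--(\ref{cA2}) and the preceding Lemma relating $\mathcal{L}^{SG}$ to $\mathcal{L}^M$, the diffusion generated by the $Q$-Wiener noise pushes forward to the Laplace--Beltrami operator $\Delta^M$ on $M$, so that the trace term is precisely the weighted (Ornstein--Uhlenbeck type) Laplacian $\Delta^{SG}$ scaled by $\nu$. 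Hence $A^v W = \partial_t W + \nu\Delta^{SG} W + \langle\langle v, \nabla^{SG} W\rangle\rangle_{T_g(SG)}$.

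Substituting $L(t,g,v) = \tfrac{1}{2}\|v\|^2_{T_g(SG)} - \mathcal{V}(g)$, and noting that $\partial_t W$, $\nu\Delta^{SG} W$ and $\mathcal{V}(g)$ do not depend on $v$, the dynamic programming equation (\ref{HJB-4}) reduces to
\[
0 = \partial_t W + \nu\Delta^{SG} W - \mathcal{V}(g) + \inf_{v\in T_g(SG)}\Big\{\langle\langle v, \nabla^{SG} W\rangle\rangle_{T_g(SG)} + \tfrac{1}{2}\|v\|^2_{T_g(SG)}\Big\}.
\]
The inner functional is a strictly convex quadratic on the Hilbert space $T_g(SG)$; completing the square gives
\[
\langle\langle v, \nabla^{SG} W\rangle\rangle_{T_g(SG)} + \tfrac{1}{2}\|v\|^2_{T_g(SG)} = \tfrac{1}{2}\|v + \nabla^{SG} W\|^2_{T_g(SG)} - \tfrac{1}{2}\|\nabla^{SG} W\|^2_{T_g(SG)},
\]
so the infimum equals $-\tfrac{1}{2}\|\nabla^{SG} W\|^2_{T_g(SG)}$ and is attained at the unique minimizer $v^* = -\nabla^{SG} W$. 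Substituting back yields the HJB equation (\ref{HJB-W}), while the terminal condition $W(T,g) = \Psi(g)$ is immediate from the definition (\ref{W-Gvaluefunction}) at $t=T$; the optimal Markov control is thus $U = -\nabla^{SG} W$.

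The main obstacle is not the algebra but the infinite-dimensional analysis justifying each step. One must guarantee enough regularity of the value function, namely $W\in C^{1,2}([0,T]\times SG, \mathbb{R})$, for Theorem \ref{DPPG} to apply; one must check that the candidate minimizer $v^* = -\nabla^{SG} W$ is an admissible control in $\mathcal{U}^{SG}$, in particular divergence-free and of the right-invariant form dictated by Proposition \ref{U-u}; and, most delicately, one must ensure that the second-order trace term is well-defined and genuinely equals $\nu\Delta^{SG}$. This last point rests on $Q$ being trace-class (equivalently $Q^{1/2}$ Hilbert--Schmidt) and on the existence on the compact manifold $M$ of the vector fields $\{A_i\}$ satisfying (\ref{cA1})--(\ref{cA2}), which is exactly what makes the $Q$-Wiener noise generate $\Delta^M$ after push-forward and hence $\Delta^{SG}$ on the group.
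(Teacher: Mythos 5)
Your proposal is correct and follows essentially the same route as the paper: invoke the dynamic programming equation (\ref{HJB-4}) from Theorem \ref{DPPG} for the controlled SDE (\ref{SDEG}), and minimize the resulting quadratic in the control by completing the square, exactly as in the proof of Theorem \ref{BurgersM} on $M$. Your explicit treatment of the identification $\nu\Delta_Q^{SG}=\nu\Delta^{SG}$ and of the regularity/admissibility caveats only spells out what the paper leaves implicit.
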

   \begin{proof} 
By the extended Bellman dynamic programming principle on $SG$ (see Eq. $(\ref{HJB-4})$ in Section $3$), we have
\begin{eqnarray*}
\inf_{U\in\mathcal{SG}}\{\partial_tW+\langle\langle U, \nabla^{SG} W\rangle\rangle_{T_g (SG)}+\nu\Delta^{SG}W+L(t, g, U)\}=0.
\end{eqnarray*}
By the same argument as used in Section $2$ for the proof of the HJB equation on $M$, see also the description in Subsection $2.2$, we conclude that the optimal Markov control $U(t, g)$ is given by $$U(t, g)=-\nabla^{SG} W(t, g),$$ 
and $W$ satisfies the Hamilton-Jacobi-Bellman equation $(\ref{HJB-W})$  
      with the terminal condition $W(T, g)=\Psi(g)$. \end{proof}

\subsection{New derivation of the incompressible Navier-Stokes equation}

In this subsection, we use the Hamilton-Jacobi-Bellman equation on $SG=\operatorname{SDiff}(M)$ to give a new derivation of the incompressible Navier-Stokes equations on a compact Riemannian manifold $M$.  

Let $d^{SG}$ be the exterior differential on $SG$. 
Following \cite{ELL}, we introduce 
\begin{eqnarray}\label{divergence}
\delta^{SG}:=-\sum\limits_{i=1}^\infty {\rm int}_{\widetilde{A}_i} L_{\widetilde{A}_i}
\end{eqnarray}
and we define
\begin{eqnarray}\label{Hodge Laplace}
\square^{SG}:=d^{SG}\delta^{SG}+\delta^{SG}d^{SG}.
\end{eqnarray}
Since $(d^{SG})^2=0$, it holds
\begin{eqnarray*}
d^{SG}\square^{SG}&=&d^{SG}d^{SG}\delta^{SG}+d^{SG}\delta^{SG}d^{SG}\\
&=&d^{SG}\delta^{SG}d^{SG}\\
&=&d^{SG}\delta^{SG}d^{SG}+\delta^{SG}d^{SG}d^{SG}\\
&=&\square d^{SG}.
\end{eqnarray*}

Following \cite{ELL, Fang-Luo:15}, under \eqref{ELL1}, \eqref{ELL2}, \eqref{ELL3}, the Hodge like Laplacian acting on differential $1$-form $\omega$ on $SG$ has the following form
\begin{eqnarray}\label{Hodge}
\square^{SG} \omega=-\sum\limits_{i=1}^\infty L^2_{\widetilde{A}_i} \omega.
\end{eqnarray}
For a vector field $X$ and a differential $1$-form $\theta$, we denote by $X^b$ the dual $1$-form of $X$ and $\theta^\sharp$ the corresponding vector field of $\theta$. The Hodge like Laplacian acting on a vector field can be defined as follows
\begin{eqnarray}\label{HodgeV}
\square^{SG} X:=(\square^{SG} X^b)^\sharp.
\end{eqnarray}
By Fang-Luo \cite{Fang-Luo:15} under \eqref{ELL1}, \eqref{ELL2}, \eqref{ELL3} and the following  condition
\begin{eqnarray}\label{ELL4}
\sum_{i=1}^{\infty} ({\rm div} A_i) L_{A_i}=0,
\end{eqnarray}
the following holds
\begin{eqnarray}\label{HodgeVM}
\square^{M} X=-\sum\limits_{i=1}^\infty L^2_{A_i} X, \ \forall X\in \Gamma(TM).
\end{eqnarray}
However, the authors used the integration by parts formula in the proof to get \eqref{HodgeVM}. As we have not proved the integration by parts formula in the infinite dimensional group $SG={\rm SDiff}(M)$ of volume preserving  diffeomorphisms, we still do not know whether \eqref{HodgeVM} holds on  $SG={\rm SDiff}(M)$. 
To overcome this difficulty, we adopt the following definition of $\square^{SG}$ acting on vector fields on $SG$
\begin{eqnarray}\label{HodgeVG}
\square^{SG} X=-\sum\limits_{i=1}^\infty L^2_{\widetilde{A}_i} X, \ \forall X\in \Gamma(T(SG)).
\end{eqnarray}

Now we state the main result of this paper as follows.

\begin{theorem}\label{Main}  
There exists a one to one correspondence  between the set of smooth
solutions to the   incompressible Navier-Stokes equations with an external force  on a compact Riemannian manifold $M$ and the set of 
solutions to the viscous Burgers equation on $SG={\rm SDiff}(M)$. More precisely, $u(t, x)$ is a smooth solution to the backward incompressible Navier-Stokes equations (in brief,  NS)
on $M$
\begin{eqnarray}
\partial_t u+\nabla^{TM}_u u-\nu \square^M u&=&-\nabla^M p-v, \label{NSM-u1}\\
\rm{div}~\hspace{0.3mm}u&=&0,\label{NSM-u2}
\end{eqnarray}
where 
$$v(x)=(de_x)\nabla^{SG} \mathcal{V}(e)$$ 
is the divergence  free vector on $M$  corresponding to  $\nabla^{SG}\mathcal{V}(e)\in \mathcal{SG}$,
if and only if $U(t, g)=(dR_g) U(t, e)$ is a smooth solution to the following backward viscous Burgers equation on $SG={\rm SDiff}(M)$
\begin{eqnarray}
\partial_t U+\nabla^{T(SG)}_UU-\nu\square^{SG}U=-\nabla^{SG} \mathcal{V}, \label{BurgersG}
\end{eqnarray}
where $dR_g$ is the differential of the right translation  $R_g: SG\rightarrow SG$, and $U(t, e)=\{u(t, x): x\in M\}$.

 \medskip
 
Second, let $W\in C^{1,3}([0,T]\times SG)$ be a solution to the Hamilton-Jacobi-Bellman equation $(\ref{HJB-W})$ on $SG={\rm SDiff}(M)$, and let
\begin{eqnarray*}
U(t, g)=-\nabla^{SG} W(t, g)
\end{eqnarray*}
be the optimal Markov control, then $U(t, g)$ satisfies the backward viscous Burgers equation $(\ref{BurgersG})$ on $SG={\rm SDiff}(M)$. 

Finally, let   
$$u(t, x)=(de_x) U(t, e),$$
then there exists a function $p\in C^1([0, T]\times M, \mathbb{R})$ such that $(u, p)$ 
satisfies the backward incompressible Navier-Stokes equations $(\ref{NSM-u1})$ and  $(\ref{NSM-u2})$  with an external force  on $M$. 
\end{theorem}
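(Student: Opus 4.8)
The plan is to prove the three assertions in the order in which they chain most naturally: first the middle statement (that $U=-\nabla^{SG}W$ solves the backward Burgers equation), which is the computational engine; then the final statement (push-forward to $M$, producing the pressure); and finally the one-to-one correspondence, obtained by combining the push-forward with its inverse lift.

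For the middle statement I would transplant the proof of Theorem \ref{BurgersM} from $M$ to $SG$ verbatim. Applying the covariant gradient $\nabla^{SG}$ to both sides of the Hamilton-Jacobi-Bellman equation $(\ref{HJB-W})$ and using the two structural identities — the geometric formula $\tfrac12\nabla^{SG}\|\nabla^{SG}W\|^2=\nabla^{T(SG)}_{\nabla^{SG}W}\nabla^{SG}W$, the $SG$-analogue of $(\ref{commutative-2})$, and the commutation relation $\nabla^{SG}\Delta^{SG}W=-\square^{SG}\nabla^{SG}W$, the $SG$-analogue of $(\ref{commutative-1})$ — one obtains an evolution equation for $\nabla^{SG}W$. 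Substituting $U=-\nabla^{SG}W$ then converts it directly into $(\ref{BurgersG})$, with the forcing $-\nabla^{SG}\mathcal{V}$ and the viscosity sign $-\nu\square^{SG}$ falling out correctly and no time reversal needed. The commutation relation is exactly where the definition $(\ref{HodgeVG})$ of $\square^{SG}$ together with the Le Jan-Watanabe conditions $(\ref{ELL1})$–$(\ref{ELL3})$ is used: these guarantee that $\Delta^{SG}=\sum_i\widetilde{A}_i^2$ (the first-order drift dropping out by $(\ref{ELL2})$) commutes with $\nabla^{SG}$ up to the Hodge term.

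For the final statement, set $u(t,x)=(de_x)U(t,e)$ and push each term of $(\ref{BurgersG})$ through $de_x$ at $g=e$. The constraint $(\ref{NSM-u2})$ is immediate from Proposition \ref{U-u}, since $u(t,\cdot)=U(t,e)\in\mathcal{SG}$ and $\mathcal{SG}$ is the space of divergence-free fields. The time-derivative and forcing terms map termwise, $(de_x)\partial_tU=\partial_t u$ and $(de_x)\nabla^{SG}\mathcal{V}(e)=v$; and since $de_x$ carries the right-invariant fields $\widetilde{A}_i$ to $A_i$ and is a bracket homomorphism on right-invariant fields, one has $(de_x)L^2_{\widetilde{A}_i}U=L^2_{A_i}u$, so by $(\ref{HodgeVG})$ and $(\ref{HodgeVM})$ the Hodge term satisfies $(de_x)\square^{SG}U=\square^M u$. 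The one term that does \emph{not} push forward verbatim is the nonlinearity $\nabla^{T(SG)}_UU$, and this is where the \emph{main obstacle} lies: the Levi-Civita connection of the right-invariant $L^2$ metric on $SG$ is not the ambient pointwise connection $\nabla^{TM}$ but its composition with the Leray-Hodge projection $P$ onto divergence-free fields, so that $(de_x)\nabla^{T(SG)}_UU=P(\nabla^{TM}_uu)$. Invoking the Hodge decomposition $\nabla^{TM}_uu=P(\nabla^{TM}_uu)+\nabla^M p$ then produces a function $p$, unique up to a constant and $C^1$ by elliptic regularity, and pushing $(\ref{BurgersG})$ forward yields $(\ref{NSM-u1})$ after renaming the pressure. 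This step is Arnold's identification of the $SG$-geodesic/Euler correspondence carried out now at the level of the covariant derivative, and making the connection on the infinite-dimensional $SG$ rigorous is the delicate point.

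Finally, the one-to-one correspondence follows by observing that the lift $u\mapsto U(t,g)=(dR_g)U(t,e)$ with $U(t,e)=\{u(t,x)\}$ and the push-forward $U\mapsto u=(de_x)U(t,e)$ are mutually inverse on right-invariant fields, by the chain rule $d(e_x\circ R_g)_e=(de_x)_g(dR_g)_e$, and that every computation above is reversible. Hence $u$ solves $(\ref{NSM-u1})$–$(\ref{NSM-u2})$ if and only if $U$ solves $(\ref{BurgersG})$.
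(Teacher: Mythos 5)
Your proposal is correct and follows essentially the same route as the paper: differentiating the HJB equation and commuting the differential with $\Delta^{SG}=\sum_i L^2_{\widetilde{A}_i}$ (the paper phrases this via $d^{SG}$ and Cartan's formula, you via $\nabla^{SG}$, which are dual formulations), identifying $(de_x)\nabla^{T(SG)}_UU$ with the Leray projection of $\nabla^{TM}_uu$ \`a la Arnold--Ebin--Marsden, pushing the Hodge term forward through the Lie-derivative representation, and extracting the pressure by Hodge decomposition after testing against divergence-free fields. The only differences are the order of the three steps and the exterior-differential versus covariant-gradient bookkeeping, neither of which changes the substance.
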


\begin{proof} First, we prove the one to one correspondence  between the set of smooth
solutions to the   incompressible Navier-Stokes equations with an external force on a compact Riemannian manifold $M$ and the set of 
solutions to the viscous Burgers equation on $SG={\rm SDiff}(M)$.

\medskip
Let 
\begin{eqnarray*}
U(t, e)=\{ u(t, x): x\in M\}
\end{eqnarray*}
as in Theorem \ref{U-u}. Then $U(t, e)\in \mathcal{SG}$, 
and we have
\begin{eqnarray*}
\partial_t U(t, e)=\{\partial_t u(t, x): x\in M\}.
\end{eqnarray*}
 
By \cite{Arnold66, EM70}, the Riemannian metric
on $M$ induces the right-invariant $L^2$-Riemannian metric on $SG={\rm SDiff}(M)$. By Arnold \cite{Arnold66},  the geodesic equation on $SG={\rm SDiff}(M)$ gives the unique Levi-Civita $L^2$ connection $\nabla^{T(SG)}$ on $SG$. More precisely, by Arnold \cite{Arnold66} and Ebin-Marsden \cite{EM70},
\begin{eqnarray*}
(\nabla^{T(SG)}_U U)(e)=(\mathbb{P} (\nabla^{TM}_u u))^R
\end{eqnarray*}
where $\mathbb{P}$ is the Leray projection to the divergence free part and 
$X^R$ represents the right-invariant vector on $SG$ corresponding to vector field $X$ on $M$.
That is to say, there exists a smooth function $p_1\in C^1(M, \mathbb{R})$ such that 
\begin{eqnarray}
(\nabla^{T(SG)}_U U)(e)=\left\{\left(\nabla^{TM}_u u-\nabla^M p_1\right)(x): x\in M\right\}
\end{eqnarray}
and at any $g\in SG$, it holds
\begin{eqnarray}\label{RINabla}
\left(\nabla^{T(SG)}_UU\right)(g)=(dR_{g})(\nabla^{T(SG)}_U U)(e).
\end{eqnarray}

Note that 
\begin{eqnarray}
(de_x)[X, Y]^{SG}_e&=&[(de_x)X(e),(de_x)Y(e)]^M,\label{PushLie}\\
 (dR_g)[X, Y]^{SG}_e&=&[(dR_g)X(e),(dR_g)Y(e)]^{SG}_g,\label{RILie}
 \end{eqnarray}
where $[X,Y]$ represents the Lie bracket of vector fields $X$ and $Y$.
Since $L_XY=[X,Y]$, by \eqref{Hodge},
we have
\begin{eqnarray*}
(de_x)(\square^{SG} U)(e)&=&-\sum_i (de_x) (L^2_{\widetilde{A}_i}U)(e)\\
&=&-\sum_i (de_x) (L_{\widetilde{A}_i}(L_{\widetilde{A}_i}U))(e)\\
&=&-\sum_i  L_{A_i}(de_x) (L_{\widetilde{A}_i}U)(e)\\
&=&-\sum_i  L_{A_i} (L_{A_i}u)(x)\\
&=&-\sum_i  L_{A_i}^2u(x).
\end{eqnarray*}
By Fang-Luo \cite{Fang-Luo:15}, it holds

\begin{eqnarray*}
\square^M u=-\sum\limits_i   L_{A_i}^2u.
\end{eqnarray*}
Therefore,
\begin{eqnarray*}
\square^{SG} U(t, e)=\{\square^M u(t, x): x\in M\}.
\end{eqnarray*}

\medskip

In view of this,  we have
\begin{eqnarray*}
\left(\partial_t U+\nabla^{T(SG)}_U U+\nu \square^{SG} U\right)(t, e)=\{\left(\partial_t u+\nabla^{TM}_u u-\nu \square^M u -\nabla p_1\right)(t, x): x\in M\}.
\end{eqnarray*}
Let  $X\in \mathcal{SG}$. Then, by the right-invariant property of the inner product 
$\langle\langle \cdot, \cdot\rangle\rangle$, \eqref{RINabla}, \eqref{RILie}  and $\partial_tU(t, g)=(dR_g)\partial_tU(t, e)$,
we obtain
\begin{eqnarray}
\left\langle\left\langle\partial_t U, X \right\rangle\right\rangle_{T_g(SG)}&=&
\left\langle\left\langle\partial_t U, X \right\rangle\right\rangle_{T_e(SG)}\nonumber\\
&=&
\int_M\langle\partial_t u(t, x), X(x)\rangle dv(x), \label{DSU1}\\
\langle\langle \square^{SG} U, X\rangle\rangle_{T_g(SG)}&=&\langle\langle \square^{SG} U, X\rangle\rangle_{T_e(SG)}\nonumber\\
&=&\int_M \langle (\square^M u)(t, x), X(x) \rangle dv(x), \label{DSU2}\\
\left\langle\left\langle  \nabla^{T(SG)}_U U, X \right\rangle\right\rangle_{T_g(SG)}
&=&
\left\langle\left\langle  \nabla^{T(SG)}_U U, X \right\rangle\right\rangle_{T_e(SG)}
\nonumber\\
&=&\int_M \left\langle (\nabla^{TM}_u u -\nabla p_1)(t, x),  X(x)\right\rangle dv(x).\label{DSU3}
\end{eqnarray}
For the calculation of $\nabla^{SG} \mathcal{V}(g)$, see Proposition \ref{prop gradient Diff}.
By Proposition \ref{U-u}, $$\nabla^{SG} \mathcal{V}(e)=\mathbb{P}(\nabla^{G} \widetilde{\mathcal{V}}(e))=\{v(x): x\in M\},$$
where $v(x)=(de_x)\nabla^{SG}\mathcal{V}(e)$.
Then 
\begin{eqnarray}\label{DSU4}
\langle\langle \nabla^{SG}\mathcal{V}, X \rangle\rangle_{T_g(SG)}=
\langle\langle \nabla^{SG}\mathcal{V}, X \rangle\rangle_{T_e(SG)}=
\int_M \langle v(x), X(x) \rangle dv(x).
\end{eqnarray}
\medskip

Now, suppose that  $U(t, g)$ is a solution to the backward viscous Burgers equation \eqref{BurgersG} on $SG$. Then for any  $X\in \mathcal{SG}=T_e(SG)$,  
combining $(\ref{DSU1})$,  $(\ref{DSU2})$, $(\ref{DSU3})$ and $(\ref{DSU4})$ with $(\ref{BurgersG})$, we have
\begin{eqnarray*}
0&=&\left\langle\left\langle \partial_t U(t, g)+\nabla^{T(SG)}_U U(t, g)-\nu\square^{SG}U(t, g)+\nabla^{SG} \mathcal{V}(g),  X( g) \right\rangle\right\rangle\\
&=&\int_M \left\langle \left(\partial_t u+\nabla^{TM} _u u-\nabla p_1-\nu \square^M u\right)(t, x)
+v(x), X(x)\right\rangle
 dv(x).
\end{eqnarray*}
That is to say, $\left(\partial_t u+\nabla^{TM} _u u-\nabla p_1-\nu \square^M u \right)(t, x)+v(x)$ is orthogonal to all 
$X\in \mathcal{SG}=\{X\in \Gamma(TM): {\rm div}\hspace{0.3mm}X=0\}$. By the Hodge decomposition theorem on compact Riemannian manifolds \cite{Taylor}, there exists a function $p\in C^1([0, T]\times M)$ such that $u=u(t, x)$ satisfies the backward 
incompressible Navier-Stokes equations with an external force  on M
\begin{equation*}
    \begin{cases}
    $$ \partial_tu+\nabla^{TM}_u u-\nu \square^M u=-\nabla^M p-v,$$&\mbox{}\\
    $$\rm{div}\hspace{0.3mm}~ u=0,$$&\mbox{}
    \end{cases}
\end{equation*}
where  $p_1$ is absorbed into $p$. 

This proves that: ${\rm viscous \ Burgers\ equation\ on\ SG} \Longrightarrow \ {\rm NS\ equations}$ with  an external force  on $M$.

Conversely, if $(u(t, x), p(t, x))$ is a solution to backward NS
with an external force, then  for any $X\in \mathcal{SG}=\{X\in \Gamma(TM): {\rm div}\hspace{0.3mm}X=0\}$, we have
\begin{eqnarray*}
& &\left\langle\left\langle \partial_t U(t, g)+\nabla^{T(SG)}_U U(t, g)-\nu\square^{SG}U(t, g)+\nabla^{SG} \mathcal{V}(g),  X(g) \right\rangle\right\rangle\\
&=&\int_M \left\langle \left(\partial_t u+\nabla^{TM} _u u-\nabla^M p_1-\nu \square^M u\right)(t, x)+v(x), X(x)\right\rangle
 dv(x)\\
 &=&\int_M \left\langle -\nabla^M p_1(t, x)-\nabla^M p(t, x), X(x)\right\rangle
 dv(x)\\
 &=&\int_M \left\langle  p_1(t, x)+p(t, x), {\rm div}~X(x)\right\rangle
 dv(x) \\
 &=&0 . 
\end{eqnarray*}
This proves that $U(t, g)$ is a solution to the backward viscous Burgers equation \eqref{BurgersG} on $SG$. This proves that: ${\rm NS\ with \ an \ external \ force \ on\ M} \Longrightarrow {\rm viscous \ Burgers\ on\ SG}$. Hence, we prove that: ${\rm NS\ with \ an \ external \ force \ on\ M} \Longleftrightarrow {\rm viscous \ Burgers\ on\ SG}$. Indeed, when $\nu=0$, this result is essentially the Arnold's famous theorem on the equivalence between the $L^2$-geodesic equation on $SG$ and the incompressible Euler equation with an external  force\  on $M$ (see \cite{Arnold}).

\medskip

Next, we prove that: ${\rm HJB \ on \ SG}\Longrightarrow {\rm viscous\  Burgers\ on\ SG}$. More precisely, if $W$ is a smooth solution to the Hamilton-Jacobi-Bellman equation $(\ref{HJB-W})$, then $U=-\nabla^{SG} W$ satisfies the backward viscous Burgers equation $(\ref{BurgersG})$. Indeed, taking differentiation on the Hamilton-Jacobi-Bellman equation $(\ref{HJB-W})$, we have 
\begin{eqnarray*}
d^{SG}(\partial_tW)-d^{SG}\left(\frac{1}{2}\|\nabla^{SG} W\|^2_{T_g(SG)}\right)+\nu d^{SG}(\Delta^{SG}W)=d^{SG} \mathcal{V}(g).
\end{eqnarray*}
By Carten's formula, 
$$L_{\widetilde{A}}=d^{SG}i_{\widetilde{A}}+i_{\widetilde{A}}d^{SG},$$
where $\widetilde{A}$ is a vector field on $SG$, and $(d^{SG})^2=0$, we have 
$$d^{SG}L_{\widetilde{A}}=L_{\widetilde{A}}d^{SG}.$$
Using the commutative formulae
$$d^{SG} \partial_t W=\partial_t d^{SG} W, \ d^{SG} \Delta^{SG} W=d^{SG} \sum_i L^2_{\widetilde{A}_i}W=\sum_i  L^2_{\widetilde{A}_i}d^{SG}W=
 -\square^{SG} d^{SG} W,$$ we have
\begin{eqnarray}
\partial_t d^{SG}W-\frac{1}{2}d^{SG} \|\nabla^{SG} W\|^2_{T_g(SG)}-\nu\square^{SG} d^{SG}W=d^{SG} \mathcal{V}(g). \label{HJBW}
\end{eqnarray}
Thus, since $\nabla^{SG} \|U\|^2=2\nabla^{T(SG)}_U U$, we obtain that $U(t, g)=-\nabla^{SG} W(t, g)$ satisfies the backward viscous Burgers equation $(\ref{BurgersG})$ on $SG$
\begin{eqnarray*}
\partial_t U+\nabla^{T(SG)}_UU-\nu \square^{SG} U=-\nabla^{SG} \mathcal{V}.
\end{eqnarray*}
Indeed, by 
the duality between the Riemmanian  gradient and exterior differential operator on $\mathcal{SG}=\{X\in \Gamma(TM): \rm{div}\hspace{0.3mm} X=0\}$,  for 
any  $X\in \mathcal{SG}=T_e(SG)$,  we have
\begin{eqnarray*}
0&=&\left(\partial_t d^{SG}W-\frac{1}{2}d^{SG} \|\nabla^{SG} W\|^2_{T_g(SG)}-\nu \square^{SG} d^{SG}W-d^{SG} \mathcal{V}\right)(X)\\
&=&\left\langle\left\langle \partial_t\nabla^{SG}W-\nabla^{SG}\left(\frac{1}{2}\|\nabla^{SG} W\|^2_{T_g(SG)}\right)-\nu\square^{SG}\nabla^{SG}W-\nabla^{SG} \mathcal{V},  X \right\rangle\right\rangle\\
&=&\left\langle\left\langle -\partial_tU-\nabla^{SG}\left(\frac{1}{2}\|U\|^2_{T_g(SG)}\right)+\nu\square^{SG} U-\nabla^{SG} \mathcal{V},  X \right\rangle\right\rangle.
\end{eqnarray*}

\medskip
Finally,  as the consequence of the first and second assertations,   
we conclude that 
$$u(t, x)=de_x U(t, e)=-(de_x)\nabla^{SG} W(t, e)$$ is a solution to the backward incompressible Navier-Stokes equations $(\ref{NSM-u1})$ and $(\ref{NSM-u2})$ with an external  force. 
\end{proof}

\begin{remark} \label{forwardNS} By time reversal, $(-u(T-t, \cdot), p(T-t, \cdot))$ satisfies the forward incompressible Navier-Stokes equation with an external force on $M$
\begin{eqnarray}\label{NSMF1}
\partial_t u+\nabla^{TM}_u u+\nu \square^M u&=&-\nabla^M p-v, \\
\rm{div}\hspace{0.3mm}~u&=&0.\label{NSMF2}
\end{eqnarray}

Moreover, 
in the case $\mathcal{V}(g)=\int_MV(g(x))dx, \  \forall g\in SG={\rm SDiff}(M)$
for some $V: M\rightarrow \mathbb{R}$, 
by  \eqref{v0} in Proposition \ref{prop gradient Diff}  below, we have
$v(x)=(de_x)\nabla^{SG}\mathcal{V}(e)=0$.
Therefore, \eqref{NSMF1}, \eqref{NSMF2} is actually the forward incompressible Navier-Stokes equation.
\end{remark} 

\begin{remark}
By the right-invariant of the vector field $\nabla^{SG} W(t, g)$ on $SG$, we have 
\begin{eqnarray*}
\nabla^{SG} W(t, g)=(dR_g)\nabla^{SG} W(t, e),
\end{eqnarray*}
which yields
\begin{eqnarray*}
u(t, x)&=&-\left(d e_{g^{-1}(x)}\right)\nabla^{SG} W(t, g)\\
&=&-\left(d (e_x\circ R_{g^{-1}})\right)\nabla^{SG} W(t, g)\\
&=&-(d e_x)\circ (dR_{g^{-1}})\nabla^{SG} W(t, g)\\
&=&-(de_x) \nabla^{SG} W(t, e),
\end{eqnarray*}
where $g^{-1}$ is the inverse of $g$.
\end{remark}

\begin{remark} Let $W(t, g)$ be a solution to HJB equation \eqref{HJB-W},  then $U(t, g)=-\nabla^{SG} W(t, g)$ satisfies the viscous Burgers equation \eqref{BurgersG}. 
Conversely, if the initial value is ``gradient type'', i.e. $U(0, g)=-\nabla^{SG} \phi(g)\in C^{1}(SG, T(SG))$ for some $\phi \in C^{2}(SG, \mathbb{R})$, it is interesting to ask that whether the solution to viscous Burgers equation \eqref{BurgersG}
is also ``gradient type'', i.e. there exists a function $W\in C^{1,2}([0,T]\times SG, \mathbb{R})$ with $W(0, g)=\phi(g)$ such that $U(t, g)=-\nabla^{SG} W(t, g)\in C^{1,1}([0,T]\times SG, T(SG))$ for all $g\in SG$. Moreover,  $W(t, g)$ satisfies the HJB equation \eqref{HJB-W}. 
 In finite dimensional Euclidean spaces, it is a well known result, see \cite{Cole, Hopf, Taylor}.
In infinite dimensional group $SG={\rm SDiff}(M)$ of volume preserving diffeomorphsims, we can prove that $U(t, g)=-\nabla^{SG} W(t, g)$ satisfies the viscous Burgers equation with ``gradient type'' Cauchy initial value, but the uniqueness of the solution to the viscous Burgers equation with ``gradient type'' Cauchy initial value is in our future research plan.

For viscous Burgers equation in finite dimensional Euclidean space or Riemannian manifold, the local existence and uniqueness of smooth solutions can be find in \cite{Taylor}, in particular,  the solution in suitable Sobolev spaces,
see Theorem 4.1, P558 in\cite{LSU} and  the result on torus in \cite{PR}. It is interesting to ask that whether the theorem of local existence and uniqueness of smooth solutions holds for the Cauchy problem of  the viscous Burgers equation \eqref{BurgersG} on the infinite dimensional group $SG={\rm SDiff}(M)$ of volume preserving diffeomorphsims.
This is in our future research plan.
\end{remark}

 \begin{remark}  
Here we would like to point out that  
even  the vector field $U(t, g)=-\nabla^{SG} W(t, g)$ is a gradient type vector field on $SG={\rm SDiff}(M)$, and it could be regarded as an ``irrotational'' vector field on 
$ SG={\rm SDiff}(M)$,
the solutions to the incompressible Navier-Stokes equations, i.e. 
$$ u(t, x)=-\left(d e_{x}\right)\nabla^{SG} W(t, e),$$
which is  the pushing forward vector  to the compact Riemannian manifold $M$ by the differential of the evaluation map,  is a divergence free vector and  is not a gradient type vector on $M$. 
In particular, in the case ${\rm dim}~M=3$, $u(t, x)=-\left(d e_{x}\right)\nabla^{SG} W(t, e)$ is not an irrotational vector  on $M$. 
\end{remark}

\medskip

To better explain  $\nabla^{SG}W$, we give an explicitly calculation of $u(t, x)=-(de_x) \nabla^{SG} W(t, e)$ in the following Proposition.
 $i: SG={\rm SDiff}(M)\rightarrow G={\rm Diff}(M)$ be the canonical isometric mapping,
$\mathcal{V}:SG={\rm SDiff}(M)\rightarrow \mathbb{R}$ and $\widetilde{\mathcal{V}}:{G=\rm Diff}(M)\rightarrow \mathbb{R}$ satisfying $\widetilde{\mathcal{V}}(g)=\mathcal{V}(g), \forall g\in SG={\rm SDiff}(M)$. Let $\mathbb{P}: L^2(M, \Gamma(TM))\rightarrow L^2(M, \Gamma(TM))\cap Ker({\rm div})$ be the Leray orthogonal projection. Since $i: SG\rightarrow G$ is isometry, we obtain
$$\langle\langle X,Y\rangle\rangle_{SG}=\langle\langle X,Y\rangle\rangle_{G}, \forall X, Y\in T_g(SG).$$
By definition,
$$\langle\langle \nabla^{G} W(g), X(g) \rangle\rangle=dW(g) X(g)=\left.\frac{\partial}{\partial \varepsilon}\right|_{\varepsilon=0}W(e^{\varepsilon X}g), \forall g\in G, ~X(g)\in T_g{G}.$$

\begin{proposition}\label{prop gradient Diff}
Let $\widetilde{\mathcal{V}}(g)=\int_MV(g(x))dv(x), \forall g\in G={\rm Diff}(M)$ for some $V: M\rightarrow \mathbb{R}$. Then
the gradient of $\widetilde{\mathcal{V}}$  on $G={\rm Diff}(M)$ is given by
$$(de_{g^{-1}(x)})(\nabla^{G}\widetilde{\mathcal{V}}(g))=\frac{\nabla^M V(x)}{det(Dg(g^{-1}(x)))}.$$
Furthermore,  we have
\begin{eqnarray}\label{v0}
v(x)=(de_x)\nabla^{SG}\mathcal{V}(e)=0, 
\end{eqnarray}
where 
$\mathcal{V}(g)=\int_MV(g(x))dv(x), \forall g\in SG={\rm SDiff}(M)$.

Let $\mathcal{V}$ be a smooth cylinder function on $SG$, that is, there exists a smooth 
 function $V\in C^\infty(M, \mathbb{R})$ such that 
$$\mathcal{V}(g)=V(g(x_1), \cdots, g(x_n) ),\ \ \ \forall g\in SG,\ \ \ \ {\rm for \ fixed} \ \ x_1,\cdots, x_n\in M.$$
Then the gradient of 
a function on $SG={\rm SDiff}(M)$ is given by 
\begin{eqnarray*}
(de_{g^{-1}(x)})\nabla^{SG}\mathcal{V}(g)=\frac{1}{vol(M)} \sum_{i=1}^n \mathbb{P}\left(\nabla^M_{y_i} V(\cdot)\delta_{x_i}(g^{-1}(\cdot))\right)(x),
\end{eqnarray*}
where $\nabla_{y_i}^M V$ is the gradient of $y_i\mapsto V(\cdot, y_i, \cdot)$ with respect to the $i$-th variable $y_i$ on $M$.

For general form of function $\mathcal{V}$ on $SG$,  we have
\begin{eqnarray*}
\nabla^{SG} \mathcal{V}= \mathbb{P} \nabla^{G} \widetilde{\mathcal{V}}=\sum\limits_{i}  \langle \langle \nabla^{G} \widetilde{\mathcal{V}}, \widetilde{A}_i\rangle \rangle \widetilde{A}_i,
\end{eqnarray*}
where $\{\widetilde{A}_i, i\in \mathbb{N}\}$ is the horizontal lift of a
complete orthonormal basis of divergence free vector fields $\{A_i, i\in \mathbb{N}\}$ on $M$.
\end{proposition}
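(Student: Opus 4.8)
The plan is to organize all four assertions around the single identity $\nabla^{SG}\mathcal{V}=\mathbb{P}\,\nabla^{G}\widetilde{\mathcal{V}}$, which says that the gradient on the isometrically embedded submanifold $SG\subset G$ is the orthogonal (Leray) projection of the ambient gradient onto the divergence-free tangent space $T_g(SG)$. Since $i:SG\to G$ is an isometry and $\widetilde{\mathcal{V}}|_{SG}=\mathcal{V}$, this follows from the general fact that $\langle\langle\nabla^{SG}\mathcal{V},Z\rangle\rangle=d\mathcal{V}(Z)=d\widetilde{\mathcal{V}}(Z)=\langle\langle\nabla^{G}\widetilde{\mathcal{V}},Z\rangle\rangle$ for every $Z\in T_g(SG)$, together with the characterization of $\mathbb{P}$ as the orthogonal projection of $L^2(M,\Gamma(TM))$ onto $L^2\cap\mathrm{Ker}(\mathrm{div})$; expanding $\mathbb{P}$ in the orthonormal basis $\{\widetilde{A}_i\}$ of $\mathcal{SG}$ gives $\mathbb{P}=\sum_i\langle\langle\cdot,\widetilde{A}_i\rangle\rangle\widetilde{A}_i$, which is exactly the last (general) formula. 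Everything else reduces to computing $\nabla^{G}\widetilde{\mathcal{V}}$ and projecting.

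For the first formula I would differentiate $\widetilde{\mathcal{V}}(g)=\int_M V(g(x))\,dv(x)$ along the curve $\varepsilon\mapsto e^{\varepsilon X}\circ g$, whose velocity at $g$ is the field $x\mapsto X(g(x))$; the chain rule gives $d\widetilde{\mathcal{V}}(g)(X\circ g)=\int_M\langle\nabla^M V(g(x)),X(g(x))\rangle\,dv(x)$. The decisive point is that the $L^2$ metric on the full group $G$ is right-invariant, so at $g$ it is the metric at $e$ transported by $dR_{g^{-1}}$; the change of variables $y=g(x)$ then produces the Jacobian weight $\langle\langle V,W\rangle\rangle_g=\int_M\langle V(x),W(x)\rangle\,\det(Dg(x))\,dv(x)$, which indeed reduces to the plain $L^2$ pairing on $SG$ where $\det(Dg)\equiv 1$. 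Matching the two expressions for arbitrary $X$ forces $(\nabla^{G}\widetilde{\mathcal{V}}(g))(x)=\nabla^M V(g(x))/\det(Dg(x))$, and applying $de_{g^{-1}(x)}$ (i.e. substituting $g^{-1}(x)$) yields $\nabla^M V(x)/\det(Dg(g^{-1}(x)))$, as claimed.

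The vanishing $v=0$ is then immediate: at $g=e$ one has $\det(Dg)\equiv 1$, so $\nabla^{G}\widetilde{\mathcal{V}}(e)=\nabla^M V$; being an exact field, $\nabla^M V$ is $L^2$-orthogonal to every divergence-free field since $\int_M\langle\nabla^M V,X\rangle\,dv=-\int_M V\,\mathrm{div}\,X\,dv=0$, so the Leray projection annihilates it and $v(x)=(de_x)\mathbb{P}(\nabla^M V)=0$ by Proposition \ref{U-u}. For the cylinder case I would repeat the differentiation: along $e^{\varepsilon X}\circ g$ the derivative of $\mathcal{V}(g)=V(g(x_1),\dots,g(x_n))$ is $\sum_i\langle\nabla^M_{y_i}V,X(g(x_i))\rangle$, a finite sum of point evaluations. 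Writing each evaluation as an integral against a Dirac mass, using that $g$ is volume preserving so that $\delta_{g(x_i)}=\delta_{x_i}\circ g^{-1}$, and applying the projection formula of the first paragraph gives the stated expression $\tfrac{1}{\mathrm{vol}(M)}\sum_i\mathbb{P}(\nabla^M_{y_i}V(\cdot)\,\delta_{x_i}(g^{-1}(\cdot)))$, the normalizing constant coming from the convention fixing the reference (normalized volume) measure in the $L^2$ pairing.

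The main difficulty I anticipate is analytic rather than algebraic. The gradient of a cylinder function is genuinely distributional, a combination of Dirac masses, so the Riesz identification of $d\mathcal{V}(g)$ with an element $\nabla^{SG}\mathcal{V}(g)\in T_g(SG)$ must be read weakly, and one has to verify that the Leray projection $\mathbb{P}$ extends continuously to such distributional vector fields and that the normalization constant is tracked consistently across the two formulas. Likewise, the reduction $\nabla^{SG}=\mathbb{P}\,\nabla^{G}$ presupposes that $SG$ is isometrically and suitably embedded in $G$, so that $T_g(SG)=\{X\circ g:\mathrm{div}\,X=0\}$ is a closed subspace whose orthogonal complement is the exact fields; granting the isometry $i$ and the Hodge decomposition on $M$ quoted earlier, the remaining steps are the routine change-of-variable and integration-by-parts identities sketched above.
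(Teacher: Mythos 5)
Your proposal is correct and follows essentially the same route as the paper: differentiate $\widetilde{\mathcal{V}}$ along the flow $\varepsilon\mapsto e^{\varepsilon X}g$, identify $\nabla^{G}\widetilde{\mathcal{V}}$ through the right-invariant $L^2$ pairing via the change of variables $y=g(x)$ (producing the Jacobian factor), observe that $\nabla^M V$ is $L^2$-orthogonal to divergence-free fields so the Leray projection kills it at $g=e$, and treat cylinder functions by writing the point evaluations as integrals against Dirac masses before projecting. Your added justification of $\nabla^{SG}\mathcal{V}=\mathbb{P}\,\nabla^{G}\widetilde{\mathcal{V}}$ via the isometric embedding, and your caveat about the distributional nature of the cylinder-function gradient, are consistent with (and slightly more explicit than) the paper's own argument.
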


\begin{proof}
If $\widetilde{\mathcal{V}}(g)=\int_MV(g(x))dv(x), \forall g\in G={\rm Diff}(M)$, by the change of variable, we have
\begin{eqnarray*}
\widetilde{\mathcal{V}}(g)=\int_MV(g(x))dv(x)=\int_MV(y)\frac{dv(y)}{det(Dg(g^{-1}(y)))}.
\end{eqnarray*}
Then
\begin{eqnarray}\label{gradient Diff 1}
\langle \langle  \nabla^{G}\widetilde{\mathcal{V}}(g), X(g)\rangle\rangle
&=&\left.\frac{\partial}{\partial \varepsilon}\right|_{\varepsilon=0}\widetilde{\mathcal{V}}(e^{\varepsilon X}g) \nonumber\\
&=&\left.\frac{\partial}{\partial \varepsilon}\right|_{\varepsilon=0}\int_MV((e^{\varepsilon X}g)(x))dv(x) \nonumber\\
&=&\int_M(dV)(g(x)) X(g(x))dv(x) \nonumber\\
&=&\int_M\langle \nabla^M V(g(x)), X(g(x)) \rangle dv(x)\nonumber\\
&=&\int_M\langle \nabla^M V(y), X(y) \rangle \frac{dv(y)}{det(Dg(g^{-1}(y)))}.
\end{eqnarray}
Hence,
$$(de_{g^{-1}(x)})(\nabla^{G}\widetilde{\mathcal{V}}(g))=\frac{\nabla^M V(x)}{det(Dg(g^{-1}(x)))}.$$
Furthermore, since $g\in SG$, we have  $det(Dg)=1$. Hence, by right-invariant of the Riemannian metric, we obtain
$$
\left\langle\left\langle \nabla^{SG}\mathcal{V}, X \right\rangle\right\rangle=\int_M\langle \nabla^M V(x), X(x) \rangle dv(x)=0, 
\forall X\in T_e(SG).
$$
That is to say,  $v(x)=(de_x)\nabla^{SG} \mathcal{V}(e)$ is orthogonal to all 
$X\in \mathcal{SG}=\{X\in \Gamma(TM): {\rm div}\hspace{0.3mm}X=0\}$. 
On the other hand, ${\rm div}~v=0$. We obtain that $v
=(de_x)\nabla^{SG}\mathcal{V}(e)=0.$

If $\widetilde{\mathcal{V}}(g)=V(g(x_1),\cdots, g(x_n))$ is a cylinder function on $G={\rm Diff}(M)$, we have
\begin{eqnarray*}
\langle\langle \nabla^{G} \widetilde{\mathcal{V}}(g), X(g) \rangle\rangle&=&\left.\frac{\partial}{\partial \varepsilon}\right|_{\varepsilon=0}\widetilde{\mathcal{V}}(e^{\varepsilon X}g),\\
&=&\left.\frac{\partial}{\partial \varepsilon}\right|_{\varepsilon=0}V(e^{\varepsilon X}g(x_1),\cdots,
e^{\varepsilon X}g(x_n)),\\
&=&\sum_{i=1}^n \left\langle (\nabla^M_{y_i}V)(g(x_i)),X(g(x_i))\right\rangle\\
&=&\frac{1}{vol(M)}\int_M \sum_{i=1}^n \left\langle (\nabla^M_{y_i}V)(g(x))\delta_{x_i}(x),X(g(x))\right\rangle dv(x)\\
&=&\frac{1}{vol(M)}\int_M \sum_{i=1}^n \left\langle (\nabla^M_{y_i}V)(y)\delta_{x_i}(g^{-1}(y)), X(y)\right\rangle \frac{dv(y)}{det(Dg(g^{-1}(y)))}
\end{eqnarray*}
where $\nabla_{y_i}^M V$ is the Riemannian gradient of $y_i\mapsto V(\cdot, y_i, \cdot)$ with respect to the $i$-th variable $y_i$ on $M$, and in the last step we have changed of variable. Then we obtain 
\begin{eqnarray*}
(de_{g^{-1}(x)})\nabla^{G}\widetilde{\mathcal{V}}(g)=\frac{1}{vol(M)} \sum_{i=1}^n\frac{\nabla^M_{y_i} V(x)\delta_{x_i}(g^{-1}(x))}{det(Dg(g^{-1}(x)))}.
\end{eqnarray*}
Hence, 
\begin{eqnarray*}
(de_{g^{-1}(x)}) \nabla^{SG}\mathcal{V}(g)=(de_{g^{-1}(x)}) \mathbb{P}(\nabla^{G} \widetilde{\mathcal{V}}(g))=\frac{1}{vol(M)} \sum_{i=1}^n\mathbb{P}\left(\frac{\nabla^M_{y_i} V(\cdot )\delta_{x_i}(g^{-1}(\cdot))}{det(Dg(g^{-1}(\cdot)))}\right)(x).
\end{eqnarray*}
In particular, for $g\in SG={\rm SDiff}(M)$, since $det(Dg(g^{-1}(x)))=1$, we have
\begin{eqnarray*}
(de_{g^{-1}(x)}) \nabla^{SG}\mathcal{V}(g)&=&\frac{1}{vol(M)} \sum_{i=1}^n\mathbb{P}\left(\nabla^M_{y_i} V(\cdot)\delta_{x_i}(g^{-1}(\cdot))\right)(x).
\end{eqnarray*}
Taking $g=e$, we have
\begin{eqnarray*}
(de_{x}) \nabla^{SG}\mathcal{V}(e)&=&\frac{1}{vol(M)} \sum_{i=1}^n\mathbb{P}\left(\nabla^M_{y_i} V(\cdot)\delta_{x_i}(\cdot)\right)(x).
\end{eqnarray*}

Hence, $u(t, x)=-(de_x)\nabla^{SG}W(t, e)$ is a divergence free vector field on $M$, and is not a gradient type vector field on $M$.
\end{proof}
\begin{example}
Consider the n-dimensional torus $M=\mathbb{T}^n$, see \cite{Cipriano-Cruzeiro:07, Fang-Luo:15}, 
for $k\in \mathbb{T}^n/\{0\}$, let  $\{e_{k, j}, j=1, \cdots, n-1\}$ be the orthonormal basis of $k^{\perp}$, then
  $\{A_{k, j}(\theta)=\frac{2}{(2\pi)^n}\cos(k\cdot \theta)e_{k, j}, B_{k, j}(\theta)=\frac{2}{(2\pi)^n}\sin(k\cdot \theta)e_{k, j}, k\in \mathbb{T}^n/\{0\}, j=1, \cdots, n-1 \}$ is a complete orthonormal basis of divergence free vector fields $X$ in $L^2(\mathbb{T}^n)$ such that $\int_{\mathbb{T}^n}  X(\theta) d\theta =0$.  By \cite{Cipriano-Cruzeiro:07, Fang-Luo:15}, one can choose renormalization constants $\{c_k, k=1,\cdots,\infty\}$ such that SDE \eqref{SDEG} with $M=\mathbb{T}^d$ has a unique strong solution. Since
\begin{eqnarray*}
&& (\nabla^M_{y_1} V,\cdots,\nabla^M_{y_n} V)\\
&=&\sum_{k\in \mathbb{Z}^n/\{0\}}\sum_{j=1}^{n-1} c_k^2[ \langle\langle (\nabla^M_{y_1} V,\cdots,\nabla^M_{y_n} V), \widetilde{A}_{k, j}(\theta) \rangle\rangle  \widetilde{A}_{k, j}(\theta)\\
&&+\langle\langle(\nabla^M_{y_1} V,\cdots,\nabla^M_{y_n} V), \widetilde{B}_{k, j}(\theta)\rangle \rangle \widetilde{B}_{k, j}(\theta)]
\end{eqnarray*}
we obtain
\begin{eqnarray*}
 \sum_{i=1}^n\mathbb{P}\left(\nabla^M_{y_i} V(\cdot)\delta_{x_i}(\cdot)\right)
&=&\sum_{i=1}^n\sum_{k\in \mathbb{Z}^n/\{0\}}\sum_{j=1}^{n-1}  c_k^2[ \langle\langle \nabla^M_{y_i} V, \widetilde{A}_{k, j}(\theta) \rangle\rangle  \widetilde{A}_{k, j}(\theta)\\
&&+\langle\langle\nabla^M_{y_i} V, \widetilde{B}_{k, j}(\theta)\rangle \rangle \widetilde{B}_{k, j}(\theta)].
\end{eqnarray*}
 We need the renormalization constants $\{c_k, k=1,\cdots,\infty\}$ such that $U(t, e)=-\nabla^{SG} W(t, e)$ takes value in $Q^{1\over 2} (L^2(M))$, where 
$Q_{ij}=c_i\delta_{ij}, i, j=1,\cdots, \infty$.
\end{example}

\begin{example}
Consider the n-dimensional sphere $M=\mathbb{S}^n$, see \cite {LR02, Fang-Zhang:06, 
Fang-Luo:15, Luo:15},
 let $d_l$ be the dimensional of the divergence free eigenvector  spaces of $\square^M$ and let $\{V_{l, k}, l=1,\cdots,\infty, k=1, \cdots , d_l\}$ be a complete orthonormal basis of the divergence free eigenvector  spaces. By \cite{LR02, Fang-Zhang:06, 
Fang-Luo:15, Luo:15}, one can choose renormalization constants $\{c_l, l=1,\cdots,\infty\}$ such that SDE \eqref{SDEG} with $M=\mathbb{S}^n$ has a unique strong solution.
Since
\begin{eqnarray*}
&&(\nabla^M_{y_1} V,\cdots,\nabla^M_{y_n} V)\\
&=&\sum_{l=1}^{+\infty}\sum_{k=1}^{d_l} c_l^2 \langle\langle(\nabla^M_{y_1} V,\cdots,\nabla^M_{y_n} V), \widetilde{V}_{l, k} \rangle\rangle  \widetilde{V}_{l, k},
\end{eqnarray*}
we obtain
$$ \sum_{i=1}^n\mathbb{P}\left(\nabla^M_{y_i} V(g(x_i))\right)= 
\sum_{i=1}^n\sum_{l=1}^{+\infty}\sum_{k=1}^{d_l}c_l^2 \langle \langle\nabla^M_{y_i} V, \widetilde{V}_{l, k} \rangle \rangle \widetilde{V}_{l, k}.$$
We need the renormalization constants $\{c_l, l=1,\cdots,\infty\}$ such that $U(t, e)=-\nabla^{SG} W(t, e)$ takes value in $Q^{1\over 2} (L^2(M))$, where 
$Q_{ij}=c_i\delta_{ij}, i, j=1,\cdots, \infty$.
\end{example}

\medskip

In summary, we have the following graph which illustrates the relationship between the Hamilton-Jacobi-Bellman and viscous Burgers equation on $SG={\rm SDiff}(M)$ 
\begin{eqnarray*}
{\rm Controlled \ SDE\ (\ref{SDEG})\ on}\  SG={\rm SDiff}(M) &\Longrightarrow &{\rm DPP\ on}\ SG\Longrightarrow  {\rm HJB \ Eq} \ (\ref{HJB-W}) \ {\rm on}\ SG\\
{\rm differentiating\ HJB\ Eq\ (\ref{HJB-W}) \ on}\ SG  &\Longrightarrow & {\rm viscous \ Burgers\ Eq}\ (\ref{BurgersG}) \ {\rm on}\ SG.
\end{eqnarray*}
Moreover, we have the following equivalence between the viscous Burgers equation on $SG={\rm SDiff}(M)$ and and the incompressible Navier-Stokes equations  on $M$
\begin{eqnarray*}
&&{\rm \ pushing\ forward}\ {\rm Burgers\ Eq}\ (\ref{BurgersG})\ to \ M \ via \ de_{x}: T_e(SG) \rightarrow T_x M\ \\
&&\Longleftrightarrow   {\rm NS\ Eqs\ \eqref{NSM-u1}\ \eqref{NSM-u2} \ with\ an \ external\ force\  on}\ M.
\end{eqnarray*}

\medskip

\section{The incompressible Euler equation}

In this section, we consider the particular case where the viscosity vanishes, i.e., $\nu=0$, and derive  the incompressible Euler equation from the deterministic dynamic programming principle on the group $SG={\rm SDiff}(M)$ over a compact Riemannian manifold.

Consider the deterministic control problem on $SG={\rm SDiff}(M)$
\begin{eqnarray*}
W(t, g)
:=\inf\limits_{U\in\mathcal{U}^{SG}}\left[\left.\int_t^TL(s, g(s), U(s))ds+\Psi(g(T))\right|g(t)=g\right],
\end{eqnarray*}
where $g(s)$ satisfies the following ODE on $SG$ 

\begin{equation}\label{ODEG}
   \begin{cases}
    $${d\over ds} g(s)=U(s, g(s)),$$
    &\mbox{$s\in(t,T],$}\\
    $$g(t)=g.$$ &\mbox{}
   \end{cases}
\end{equation}
Consider  the Lagrangian 
$$L(s, g(s), U(s))=\frac{1}{2}\|U\|_{T_{g(s)}(SG)}^2-\mathcal{V}(g(s)).$$
By the extended deterministic dynamic programming principle on $SG={\rm SDiff}(M)$, whose proof can be given similarly to the proof of Theorem \ref{DPPG}, we have
\begin{eqnarray*}
\inf_{U\in T_g(SG)}\{\partial_tW+\langle \langle U, \nabla^{SG}W\rangle \rangle_{T_g(SG)}+L(t, g, U)\}=0.
\end{eqnarray*}
The optimal Markov control $U(t, g)$ is of the form 
$$U(t, g)=-\nabla^{SG} W(t, g),$$ 
and  the value function $W$ satisfies the Hamilton-Jacobi equation
\begin{equation}\label{HJ-W}
   \partial_tW-\frac{1}{2}\|\nabla^{SG}W\|^2_{T_g(SG)}=\mathcal{V}, 
\end{equation}
with the terminal value
   $$W(T, g)=\Psi(g).$$
   
   \vskip0.5cm
   
The  following result gives a new derivation of the incompressible Euler equation with an external force from the Hamilton-Jacobi equation on $SG={\rm SDiff}(M)$.
\begin{theorem}\label{Main-Euler}  There exists a one to one correspondence  between the set of smooth
solutions to the   incompressible Euler equation with an external force on a compact Riemannian manifold $M$ and the set of 
solutions to the Burgers equation on $SG={\rm SDiff}(M)$. More precisely, $u(t, x)$ is a smooth solution to the  incompressible Euler equation with an external force
on $M$
\begin{eqnarray}
\partial_t u+\nabla^{TM}_u u&=&-\nabla^M p-v, \label{EME-1}\\
{\rm div}~u&=&0, \label{EME-2}
\end{eqnarray}
where 
$$v(x)=(de_x)\nabla^{SG} \mathcal{V}(e)$$ 
is the divergence free vector on $M$ corresponding to  $\nabla^{SG}\mathcal{V}(e)\in T_e(SG)$,
if and only if $U(t, g)=(dR_g) U(t, e)$ is a smooth solution to the following Burgers equation on $SG={\rm SDiff}(M)$
\begin{eqnarray}
\partial_t U+\nabla^{T(SG)}_UU=-\nabla^{SG} \mathcal{V}, \label{BurgersG2}
\end{eqnarray}
where $dR_g$ is the differential of the right translation  $R_g: SG\rightarrow SG$, and $U(t, e)=\{u(t, x): x\in M\}$.

 \medskip
 
Second, let $W\in C^{1,2}([0,T]\times SG)$ be a solution to the Hamilton-Jacobi equation $(\ref{HJ-W})$ on $SG={\rm SDiff}(M)$, and let
\begin{eqnarray*}
U(t, g)=-\nabla^{SG} W(t, g)
\end{eqnarray*}
be the optimal Markov control, then $U(t, g)$ satisfies the Burgers equation $(\ref{BurgersG2})$ on $SG={\rm SDiff}(M)$. 

Finally, let   
$$u(t, x)=(de_x) U(t, e).$$
Then there exists a function $p\in C^1([0, T]\times M, \mathbb{R})$ such that $(u, p)$ 
satisfies the incompressible Euler equation $(\ref{EME-1})$ and  $(\ref{EME-2})$ on $M$  with an external force. 
\end{theorem}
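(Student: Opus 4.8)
The plan is to mirror the proof of Theorem \ref{Main} in the degenerate case $\nu=0$, in which every term involving the Hodge Laplacian $\square^{SG}$ simply drops out. First I would fix $u(t,x)$ and form $U(t,e)=\{u(t,x):x\in M\}$ as in Proposition \ref{U-u}, extending it right-invariantly by $U(t,g)=(dR_g)U(t,e)$. The essential geometric input is Arnold's identification of the Levi-Civita connection on $SG={\rm SDiff}(M)$ carrying the $L^2$ right-invariant metric: by \cite{Arnold66, EM70} one has $(\nabla^{T(SG)}_U U)(e)=(\mathbb{P}(\nabla^{TM}_u u))^R$, so there exists $p_1\in C^1(M,\mathbb{R})$ with $(\nabla^{T(SG)}_U U)(e)=\{(\nabla^{TM}_u u-\nabla^M p_1)(x):x\in M\}$, and $(\nabla^{T(SG)}_U U)(g)=(dR_g)(\nabla^{T(SG)}_U U)(e)$ by right-invariance, exactly as in $(\ref{RINabla})$.

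Next I would test the Burgers equation $(\ref{BurgersG2})$ against an arbitrary $X\in\mathcal{SG}=T_e(SG)$. Using the right-invariance of the inner product $\langle\langle\cdot,\cdot\rangle\rangle$ together with the pairing identities $(\ref{DSU1})$, $(\ref{DSU3})$ and $(\ref{DSU4})$ (now \emph{without} the $\square^{SG}$ contribution $(\ref{DSU2})$, which is absent since $\nu=0$), I obtain
\begin{eqnarray*}
\left\langle\left\langle\partial_t U+\nabla^{T(SG)}_U U+\nabla^{SG}\mathcal{V},\,X\right\rangle\right\rangle
&=&\int_M\left\langle\left(\partial_t u+\nabla^{TM}_u u-\nabla^M p_1\right)(t,x)+v(x),\,X(x)\right\rangle dv(x).
\end{eqnarray*}
If $U$ solves $(\ref{BurgersG2})$ the left-hand side vanishes for every divergence-free $X$, so the integrand is $L^2$-orthogonal to $\mathcal{SG}=\{X\in\Gamma(TM):{\rm div}\,X=0\}$; the Hodge decomposition theorem on the compact manifold $M$ \cite{Taylor} then furnishes $p\in C^1([0,T]\times M)$, absorbing $p_1$, with $\partial_t u+\nabla^{TM}_u u=-\nabla^M p-v$ and ${\rm div}\,u=0$, which is $(\ref{EME-1})$, $(\ref{EME-2})$. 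Conversely, if $(u,p)$ solves the Euler system, the right-hand integral reduces, after moving the gradients onto the divergence-free $X$, to $\int_M\langle p_1+p,\,{\rm div}\,X\rangle\,dv=0$, giving $(\ref{BurgersG2})$; this establishes the one-to-one correspondence.

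For the second assertion I would differentiate the Hamilton-Jacobi equation $(\ref{HJ-W})$ using the exterior derivative $d^{SG}$. Since $\nu=0$ there is no $\square^{SG}d^{SG}W$ term, so the computation is purely algebraic: $\partial_t d^{SG}W-\tfrac12 d^{SG}\|\nabla^{SG}W\|^2=d^{SG}\mathcal{V}$, and dualizing against $X\in T_e(SG)$ via the duality between the Riemannian gradient and the exterior differential, together with $\nabla^{SG}\|U\|^2=2\nabla^{T(SG)}_U U$, shows that $U=-\nabla^{SG}W$ solves $(\ref{BurgersG2})$. (This is also why only $W\in C^{1,2}$ is required here, rather than the $C^{1,3}$ of Theorem \ref{Main}: the first-order equation $(\ref{HJ-W})$ need only be differentiated once.) The final assertion is then immediate by combining the second assertion with the forward direction of the correspondence: $u(t,x)=(de_x)U(t,e)=-(de_x)\nabla^{SG}W(t,e)$ solves $(\ref{EME-1})$, $(\ref{EME-2})$.

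The point I would stress is that, in contrast with Theorem \ref{Main}, there is no genuine analytic obstacle in this statement. The delicate issue in the viscous case — that the identity $\square^M X=-\sum_i L_{A_i}^2 X$ was established in \cite{Fang-Luo:15} through an integration-by-parts formula not yet available on the infinite-dimensional group $SG$ — simply never enters when $\nu=0$, since $\square^{SG}$ and $\square^M$ do not appear in $(\ref{BurgersG2})$, $(\ref{EME-1})$ or $(\ref{HJ-W})$. Consequently the only substantive inputs are Arnold's formula for the $L^2$ Levi-Civita connection on $SG$ and the Hodge decomposition on $M$, both classical, and the remaining work is exactly the pairing-and-projection bookkeeping of Theorem \ref{Main} specialized by deleting every viscous term.
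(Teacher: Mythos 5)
Your proposal is correct and follows exactly the route the paper intends: the paper's own proof of Theorem \ref{Main-Euler} consists of the single sentence that it is ``similar to the one of Theorem \ref{Main} by taking $\nu=0$,'' and you have simply carried out that specialization in full, correctly noting that every $\square^{SG}$/$\square^M$ term (and with it the only delicate analytic point of the viscous case) drops out, and that $C^{1,2}$ regularity of $W$ suffices since the Hamilton--Jacobi equation contains no second-order spatial term.
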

\begin{proof} The proof is similar to the one of Theorem \ref{Main} by taking $\nu=0$. \end{proof}

\begin{remark}  Following Arnold \cite{Arnold} Chapter 9 Sections 46-48, let $S$ be the action function defined by 
\begin{eqnarray}
S_{q_0, t_0}(q, t)=\int_{\gamma} L(q(t), \dot q(t), t) dt,
\end{eqnarray}
along the extremal $\gamma$ connecting the points $(q_0, t_0)$ and $(q, t)$. Then the differential of $S$ (for a fixed initial point) is equal to
\begin{eqnarray}
dS=pdq-Hdt, \label{HJS0}
\end{eqnarray}
where $p={\partial L\over \partial \dot q}$ and $H=p\dot q-L$.
Moreover,  $S$ satisfies the 
Hamilton-Jacobi equation 

\begin{eqnarray}
{\partial S\over \partial t}+H\left({\partial S\over \partial q}, q, t\right)=0. \label{HJS1}
\end{eqnarray}
Indeed, it  is sufficient to notice that
\begin{eqnarray}
{\partial S\over \partial t}=-H(p, q, t), \ \ p=
{\partial S\over \partial q}. \label{HJS2}
\end{eqnarray}
As pointed out by Arnold \cite{Arnold}, ``The relation just established between trajectories of mechanical systems (``rays") and partial differential equations (``wave fronts") can be used in two directions. First, solutions of Equation $(\ref{HJS1})$ can be used for integrating the ordinary differential equations of dynamics. Jacobi's method of integrating Hamilton's canonical equations, presented in the next section\footnote{See Section 47 in \cite{Arnold}.}, consists of just this.
Second, the relation of the ray and wave points of view allows one to reduce integration of the partial differential equations $(\ref{HJS1})$ to integration of a hamiltonian system of ordinary differential equations.''
See \cite{Arnold} p. 256 for more discussions on generating functions $S$ and the Hamilton-Jacobi equation. 

In our situation of the infinite dimensional group $SG={\rm SDiff}(M)$ of volume preserving diffeomorphisms , the action function $S$ is nothing but the value function $W$ that we introduce for the deterministic optimal control problem on $SG={\rm SDiff}(M)$. In view of this, Theorem \ref{Main-Euler} can be regarded as the analogous of the well-known theorem  about the generating function $S$ in Hamiltonian mechanics for the Hamiltonian function $H(\dot q, q, t)={1\over 2}|\dot q|^2+V(q)$ on $T(SG)=T{\rm SDiff}(M)$.
\end{remark}

\medskip
\section{Conclusion and some problems for further work}

\medskip

In conclusion, the main result of this paper can be formulated as follows.

\begin{theorem} \label{Main2} Let $M$ be a compact Riemannian manifold. Let $W$ be a $C^{1, 3}$-smooth solution to the Hamilton-Jacobi-Bellman equation $(\ref{HJB-W})$ on $SG={\rm SDiff}(M)$. Then 
$$u(t, x)=-(de_x)\nabla^{SG} W(t, e)$$
is a $C^{1, 2}$-smooth solution to the backward incompressible Navier-Stokes equations $(\ref{NSM-u1})$, $(\ref{NSM-u2})$. Conversely, let $u$ be a smooth solution to the backward incompressible Navier-Stokes equations $(\ref{NSM-u1})$, $(\ref{NSM-u2})$, and let $\{g_t, t\in [0, T]\}$ be the unique solution to SDE $(\ref{SDEG})$ with $g(0)=e$ on $SG={\rm SDiff}(M)$. Then the value function  $W$ for the  optimal  stochastic control problem $(\ref{W-Gvaluefunction})$ is a solution to the Hamilton-Jacobi-Bellman equation $(\ref{HJB-W})$ on $SG={\rm SDiff}(M)$.
\end{theorem}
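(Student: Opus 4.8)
The plan is to obtain both implications as consolidations of the equivalences already established, chaining through Theorem \ref{Main} and feeding in the dynamic-programming input from Theorem \ref{DPPG} and Theorem \ref{HJBGthm}. For the forward direction I would start from a $C^{1,3}$ solution $W$ of the Hamilton-Jacobi-Bellman equation \eqref{HJB-W} and run the chain of Theorem \ref{Main} in order. The second assertion of Theorem \ref{Main} gives that the optimal Markov control $U(t,g)=-\nabla^{SG}W(t,g)$ solves the backward viscous Burgers equation \eqref{BurgersG} on $SG$; concretely this is the differentiated identity \eqref{HJBW} combined with $\nabla^{SG}\|U\|^2=2\nabla^{T(SG)}_UU$. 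The first and third assertions of Theorem \ref{Main} then push this forward by $de_x$ and invoke the Hodge decomposition on $M$ to conclude that $u(t,x)=(de_x)U(t,e)=-(de_x)\nabla^{SG}W(t,e)$ solves the backward incompressible Navier-Stokes equations \eqref{NSM-u1}, \eqref{NSM-u2}. The regularity bookkeeping is routine: differentiating the $C^{1,3}$ function $W$ costs one spatial derivative, so $\nabla^{SG}W$ is of class $C^{1,2}$, and since $de_x$ is a linear pushforward the resulting $u$ is $C^{1,2}$ in $(t,x)$.

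For the converse I would begin with a smooth backward Navier-Stokes solution $u$ and form its right-invariant lift $U(t,g)=(dR_g)U(t,e)$ with $U(t,e)=\{u(t,x):x\in M\}$. The ``NS $\Rightarrow$ viscous Burgers'' implication of Theorem \ref{Main} shows that $U$ solves \eqref{BurgersG} on $SG$. To recover the value function I would run the SDE \eqref{SDEG} with $g(0)=e$ and consider the stochastic control problem \eqref{W-Gvaluefunction}: the extended Bellman dynamic programming principle of Theorem \ref{DPPG} yields the dynamic programming equation, and the completion-of-the-square computation of Theorem \ref{HJBGthm} (the infimum of $\langle\langle v,\nabla^{SG}W\rangle\rangle+\tfrac12\|v\|^2$ being $-\tfrac12\|\nabla^{SG}W\|^2$, attained at $v=-\nabla^{SG}W$) collapses it to the Hamilton-Jacobi-Bellman equation \eqref{HJB-W}. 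The remaining step is a verification argument: one must check that the lift $U$ of the given Navier-Stokes solution coincides with the optimal Markov control $-\nabla^{SG}W$, so that the $W$ produced by the control problem is genuinely the object whose gradient recovers $u$. This is exactly where the equivalence in Theorem \ref{Main} closes the loop, since both $U$ and $-\nabla^{SG}W$ solve the same backward viscous Burgers equation.

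The main obstacle lies entirely in the converse and is analytic rather than formal. Theorem \ref{HJBGthm} presupposes that the value function $W$ is $C^{1,3}$ on the infinite-dimensional manifold $SG={\rm SDiff}(M)$, but establishing this regularity, together with the verification that the candidate control is optimal, requires genuine infinite-dimensional stochastic analysis on $SG$: a well-posedness and smoothing theory for \eqref{HJB-W}, an integration-by-parts formula, and uniqueness for the gradient-type Cauchy problem for \eqref{BurgersG}. These are precisely the points the paper flags as open, so a fully rigorous converse would either have to invoke such a theory or restrict to settings (for instance $M=\mathbb{T}^d$ or $M=\mathbb{S}^d$) where the vector fields $\{\widetilde{A}_i\}$ and their renormalization are explicitly available. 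Accordingly, I would present the proof as a combination of Theorem \ref{Main}, Theorem \ref{DPPG} and Theorem \ref{HJBGthm} at the formal level, and isolate the regularity and verification of the value function as the substantive hypothesis.
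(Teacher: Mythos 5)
Your proposal is correct and follows essentially the same route as the paper, whose entire proof of this theorem is the one-line remark that it is a restatement of the results of Sections 2 and 4 (i.e.\ Theorems \ref{DPPG}, \ref{HJBGthm} and \ref{Main}, chained exactly as you describe). Your explicit identification of the analytic gaps in the converse --- the regularity of the value function on $SG$ and the verification that the lifted control is optimal --- is in fact more candid than the paper's proof itself, which defers these points to the surrounding remarks as open problems.
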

\begin{proof} This is indeed the  restatement of our main results in Section 2 and Section 4. \end{proof}

\medskip

To end this paper, we raise some problems for further research work.

\begin{remark} Our main result indicates interesting relationships among the  incompressible Navier-Stokes equations on a compact Riemannian manifold, the Hamilton-Jacobi-Bellman equation and viscous Burgers equation on the group of  volume preserving  diffeomorphisms.
By \cite{EM70, Taylor}, if the initial value $u_0\in C^\infty(M)$, then there exists some $T>0$ such that the incompressible Navier-Stokes equations $(\ref{NSM-u1})$, $(\ref{NSM-u2})$ has a local smooth solution $u\in C^\infty([0, T]\times M)$. This yields that the vector fields in SDE $(\ref{SDEG})$ are smooth on $SG$ and hence SDE $(\ref{SDEG})$ admits a unique strong solution on $[0, T]\times SG$. It is interesting to ask the question whether we can prove the regularity of the solution to the Hamilton-Jacobi-Bellman equation  $(\ref{HJB-W})$ on the group of  volume preserving  diffeomorphisms.

There exist many references on the theory of Hamilton-Jacobi-Bellman equations in finite dimensional spaces and its applications to stochastic optimal control of finite dimensional processes,  such as \cite{Fleming-Rishel, Fleming-Soner, Nisio} etc. However, only few results have been obtained in infinite dimensional spaces.
 For the references of deterministic optimal control  in infinite dimensional case,
 we mention \cite{B-D,D-Z, Nualart} and references therein.
 In \cite{FGS}, the authors give an overview of the theory of Hamilton-Jacobi-Bellman equations  in infinite dimensional Hilbert spaces and its applications to stochastic optimal control of infinite dimensional processes.  In particular, the viscosity solution and mild solution of the Hamilton-Jacobi-Bellman equations  in infinite dimensional Hilbert spaces are studied there.
We would like to point out that our case is not included in \cite{FGS}.  We give the ideas for our future study on the regularity of Hamilton-Jacobi-Bellman equation
as follows.

By the Cole-Hopf transformation 
$$\Phi(t, g)=\exp{\left[-{ W(t, g )\over {2\nu}}\right]}$$
and its inverse transform, 
it is well-known that, $W$ is a solution to the Hamilton-Jacobi-Bellman equation  $(\ref{HJB-W})$ on $SG={\rm SDiff}(M)$ if and only if $\Phi$ is a solution to the heat equation 
 \begin{eqnarray}\label{HeatVG}
 \partial _t \Phi+\nu \Delta^{SG} \Phi+\frac{1}{2\nu}V\Phi=0.
 \end{eqnarray} 
 It remains a challenge problem whether we can prove the 
existence and uniqueness of local or global smooth solution of the Hamilton-Jacobi-Bellman equation  $(\ref{HJB-W})$ and the heat equation $(\ref{HeatVG})$ on $SG={\rm SDiff}(M)$. Instead of smooth solution,  it is also interesting to study viscosity solution or solution in suitable Sobolev space in the sense of the Malliavin calculus \cite{Ma97} over $SG={\rm SDiff}(M)$. This will be our future  research project. See related works \cite{EM, HM, MP} and references therein.

\end{remark}

Finally, let us mention that, after the first version of this paper was submitted, we have finished a more recent  paper \cite{Li-Liu} entitled ``On the Lagrange multiplier method to the Euler and Navier-Stokes equations on Riemannian manifolds'', in which we give a new derivation of the incompressible 
Navier-Stokes equation on  a compact Riemannian manifold $M$ via the Bellman dynamic programming principle on the infinite dimensional group $G={\rm Diff}(M)$ of diffeomorphisms,  where the pressure is explicitly given by a variant of the Lagrange multiplier for the incompressible condition ${\rm{div}}~u=0$.

\medskip

\noindent{\bf Acknowledgement}. The authors would like to express their gratitudes to Professors A. B. Cruzeiro, F. Flandoli and T. Funaki  for their interests and stimulating  discussions in the beginning step of this work. We also thank Dr. S. Li for helpful discussions in the preparation of this work. Finally, we would like to express our sincere gratitude to the associated Editor and an anonymous referee for his/her careful reading and his/her very nice comments which lead us to revise and improve our paper. 

\providecommand{\bysame}{\leavevmode\hbox to3em{\hrulefill}\thinspace}

\begin{flushleft}

Xiang-Dong Li, 1. State Key Laboratory of Mathematical Sciences, Academy of Mathematics and Systems Science, Chinese Academy of Sciences, No. 55, Zhongguancun East Road, Beijing, 100190,  China;\\
2. School of Mathematical Sciences, University of Chinese Academy of Sciences, Beijing, 100049, China\\
E-mail: xdli@amt.ac.cn
\medskip

Guoping Liu, School of Mathematics and Statistics, Huazhong University of Science and Technology,  No.1037, Luoyu Road, Wuhan, 430074, China

E-mail: liuguoping@hust.edu.cn

\end{flushleft}

\end{CJK*}

\end{document}